\newtheorem{prop}{Proposition}[section]
\newtheorem{thm}[prop]{Theorem}
\newtheorem{cor}[prop]{Corollary}
\newtheorem{lem}[prop]{Lemma}
\newtheorem*{agra}{Acknowlegment}
\numberwithin{equation}{section}
\begin{document}

\title{Castelnuovo-Mumford Regularity of the Fiber Cone for good filtrations}
\author{P. H. Lima\thanks{Work partially supported by CNPq-Brazil 141973/2010-2 and by
Capes-Brazil.}\,\,\,\,and\,\,\,V. H. Jorge P\'erez
\thanks{Work partially supported by CNPq-Brazil - Grant
309033/2009-8, Procad-190/2007. {\it Key words}: Castelnuovo-Mumford regularity, fiber cone, associated graded ring, Rees algebra, good filtration, reduction number.}}

\date{}
\maketitle

\begin{abstract}
 In this paper we show that there is a close relationship between the invariants characterizing the homogeneous vanishing of the local cohomology of the Rees algebra and the associated graded ring for the good filtrations case. We obtain relationships between the Castelnuovo-Mumford regularity of the fiber cone, associated graded ring, Rees algebra and reduction number for the good filtrations case.
\end{abstract}

\section{Introduction}
Let $(A, \mathfrak{m})$ be a commutative Noetherian local ring and $\mathfrak{F}: \ A\supset I\supset I^{2}\supset...$ a adic-filtration. Then we have important graded algebras, namely, $R(I):=\oplus_{n\geq 0}I^{n}t^{n}$, the  associated graded ring, $G(I):=\oplus_{n\geq 0}I^{n}/I^{n+1}$ and the fiber cone, $F(I):=\oplus_{n\geq 0}I^{n}/\mathfrak{m}I^{n}=R(I)/ \mathfrak{m}R(I)$. In the papers \cite{CZ1}, \cite{CZ2} and \cite{CZ}, Cortadellas and Zarzuella, studied the depth properties of the fiber cone by using certain graded modules associated to filtration of modules. Jayanthan and Nanduri, in \cite{JN}, used some results of those articles to study the regularity of the fiber cone.

The Castelnuovo-Mumford regularity of $R(I)$ and $G(I)$ are very known (\cite{HZ}, \cite{O}, \cite{H}, \cite{T}, \cite{JU}, etc). In the paper \cite{HZ}, for example, Hoa and Zarzuela obtain many results between reduction number and $a$-invariant of good filtrations. Ooishi, in \cite{O}, proved that the regularity of $R(I)$ and $G(I)$ are equal. This formula was also discovered by Johnson and Ulrich \cite{JU}. After, in \cite{T}, Trung studied the relationships between the Rees algebra and the associated graded ring, and then, he also concluded that for any ideal in a Noetherian local ring, the regularity of $R(I)$ and $G(I)$ are equal. We show this same equality for the good filtration case.

For the $I$-adic case, Jayanthan and Nanduri \cite{JN} prove that for any ideal of analytic spread one in a Noetherian local ring, the regularity of the fiber cone is bounded by the regularity of the associated graded ring. Moreover, they obtain on certain conditions that in fact the equality holds. The goal in this paper is to give an analogous theory on regularity of the fiber cone, Rees algebra, associated graded ring and reduction number for good filtration case. We show that the regularity of the fiber cone for good filtration case behave well as in the $I$-adic case.

The paper is divided into three parts. In section 2 we introduce the basic concepts about good filtration, reduction and regularity. In the section 3 we extend the Theorem 3.1, Corollary 3.2 and Corollary 3.3 of Trung \cite{T} for the good filtration case. In section 4, we obtain relationship between the regularities of the fiber cone, the associated graded ring, Rees algebra and reduction number for the good filtration case. The results of this section generalize the results of the section 2 in \cite{JN}.

\section{Preliminaries}
A sequence $\mathfrak{F}=(I_{n})_{n\geq 0}$ of ideals of $A$ is called a \emph{filtration} of $A$ if $I_{0}=A\supset I_{1}\supseteq I_{2}\supseteq I_{3}\supseteq ...,$ $I_{1}\neq A$ and $I_{i}I_{j}\subseteq I_{i+j}$ for all $i,j\geq 0$.

Given any filtration $\mathfrak{F}$ we can construct the following two graded rings
$$
R(\mathfrak{F})=A \oplus I_{1}t \oplus I_{2}t^{2}\oplus ... \ \ , \ \ G(\mathfrak{F})= A/I_{1}\oplus I_{1}/I_{2} \oplus I_{2}/I_{3} \oplus ... .
$$
We call $R(\mathfrak{F})$ the \emph{Rees algebra} of $\mathfrak{F}$ and $G(\mathfrak{F})$ the \emph{associated graded ring} of $\mathfrak{F}$.
We also denote $G(\mathfrak{F})_{+}=\oplus_{n\geq 1}I_{n}/I_{n+1}$. If $\mathfrak{F}$ is an $I$-adic filtration, i.e, $\mathfrak{F}=(I^{n})_{n\geq 0}$ for some ideal $I$, we denote $R(\mathfrak{F})$ and $G(\mathfrak{F})$ by $R(I)$ and $G(I)$ respectively. A filtration $\mathfrak{F}$ is called \emph{Noetherian} if $R(\mathfrak{F})$ is a Noetherian ring. Noetherian filtration satisfies $\cap_{n\geq 0}I_{n}=0$. By adapting the proof of \cite[Theorem 15.7]{M}, one can prove that if $\mathfrak{F}$ is Noetherian then $\dim \hspace{0.01cm} G(\mathfrak{F})= \dim \hspace{0.01cm} A$.

For any filtration $\mathfrak{F}= (I_{n})$ and any ideal $J$ of $A$, we let $\mathfrak{F}/J$ denote the filtration $((I_{n}+J)/J)_{n}$ in the ring $A/J$. Of course that if $\mathfrak{F}$ is Noetherian then $\mathfrak{F}/J$ so is.

Let $I$ be and ideal of $A$. $\mathfrak{F}$ is called an $I$-\emph{good filtration} if $II_{i}\subseteq I_{i+1}$ for all $i\geq 0$ and $I_{n+1}=II_{n}$ for all $n\gg 0$. $\mathfrak{F}$ is called a \emph{good filtration} if it is an $I$-good filtration for some ideal $I$ of $A$. $\mathfrak{F}$ is a good filtration if and only if it is a $I_{1}$-good filtration. A good filtration is a Noetherian filtration. By \cite[Theorem III.3.1.1 and Corollary III.3.1.4]{B}, $R(\mathfrak{F})$ is a finite $R(I_{1})$-module if and only if there exists an integer $k$ such that $I_{n}\subseteq (I_{1})^{n-k}$ for all $n$ if and only if $\mathfrak{F}$ is an $I_{1}$-good filtration. If a good filtration $\mathfrak{F}$ is such that $I_{1}$ is a $\mathfrak{m}$-primary ideal, it is called a \emph{Hilbert filtration}. A good filtration $\mathfrak{F}$ is called \emph{equimultiple} if $I_{1}$ is equimultiple, i.e, $s(I_{1})=  \text{ht} \ I_{1}$ (see \cite{HZ}), where $s(I_{1})$ is the analytic spread of $I_{1}$. Hilbert filtrations are equimultiple \cite[Exercise 18.2.8]{BS}. There are a lot of examples of non-$I$-adic good filtrations and Hilbert filtrations. For instance, $( \widetilde{I^{n}} )_{n\geq 0}$ is an $I$-good filtration, where $\widetilde{I^{n}}$ is the \emph{Ratliff-Rush} ideal associated to $I^{n}$ \cite{RR}. If $A$ is Local without nilpotent elements, the filtration $(\overline{I^{n}})_{n\geq 0}$ given by the integral closures of the powers of $I$ is an $I$-good filtration if and only if $A$ is analytically unramified \cite{R1}. Moreover, if $R$ is analytically unramified containing a field and $(I^{n})^{*}$ denotes the tight closure of $I^{n}$, then $((I^{n})^{*})_{n \geq 0}$ is an $I$-good filtration. In addition, if $I$ is a $\mathfrak{m}$-primary, these filtrations are Hilbert filtrations.

A \emph{reduction} of a filtration $\mathfrak{F}$ is an ideal $J\subseteq I_{1}$ such that $JI_{n}=I_{n+1}$ for $n\gg 0$. A \emph{minimal reduction} of $\mathfrak{F}$ is a reduction of $\mathfrak{F}$ minimal with respect to containment. We also know that $J\subseteq I_{1}$ is a reduction of $\mathfrak{F}$ if and only if $R(\mathfrak{F})$ is a finite $R(J)$-module \cite[Theorem III.3.1.1]{B}. If $J$ is a reduction of the $I$-adic filtration, we simply say that $J$ is a reduction of $I$. By \cite{NR}, minimal reduction of ideals always exist. If $R(\mathfrak{F})$ is a $R(I_{1})$-module, then $J$ is a reduction of $\mathfrak{F}$ if and only if $J$ is a reduction of $I_{1}$. Thus minimal reductions of good filtration always exist. For minimal reduction $J$ of $\mathfrak{F}$ we set $r_{J}(\mathfrak{F})= \sup\{n\in \mathbb{Z} \ | \ I_{n}\neq JI_{n-1}\}.$ The \emph{reduction number} of $\mathfrak{F}$ is defined as $r(\mathfrak{F})= \min \{ r_{J}(\mathfrak{F}) \ | \ J  \text{ is minimal reduction of} \ \mathfrak{F}\}$.

Let $\mathfrak{F}$ be a Noetherian filtration. For any element $x\in I_{1}$ we let $x^{*}$ denote the image of $x$ in $G(\mathfrak{F})_{1}=I_{1}/I_{2}$ and $x^{o}$ denote the image of $x$ in $F(\mathfrak{F})_{1}=I_{1}/\mathfrak{m}I_{1}$. If $x^{*}$ is a regular element of $G(\mathfrak{F})$ then $x$ is a regular element of $A$ and $G(\mathfrak{F}/(x))\cong G(\mathfrak{F})/(x^{*})$ (see \cite[Lemma 3.4]{HZ}).

An element $x\in I_{1}$ is called \emph{superficial} for $\mathfrak{F}$ if there exists an integer $c$ such that $(I_{n+1}: \ x)\cap I_{c}=I_{n}$ for all $n\geq c$. By \cite[Remark 2.10]{HZ}, an element $x$ is superficial for $\mathfrak{F}$ if and only if $(0:_{G(\mathfrak{F})} \ x^{*})_{n}=0$ for all $n$ sufficiently large. If $\text{grade} \ I_{1} \geq 1$ and $x$ is superficial for $\mathfrak{F}$ then $x$ is a regular element of $A$. To see that, let suppose that $x$ is not a zero-divisor. Thus if $ux=0$ then $(I_{1})^{c}u\subseteq \cap_{n}((I_{n}: \ x)\cap I_{c})=\cap_{n}I_{n}=0$. Hence $u=0$.

A sequence $x_{1},...,x_{k}$ is called a \emph{superficial sequence} for $\mathfrak{F}$ if $x_{1}$ is superficial for $\mathfrak{F}$ and $x_{i}$ is superficial  for $\mathfrak{F}/(x_{1},...,x_{i-1})$ for $2\leq i \leq k$.

Let $f_{1},...,f_{r}$ be a sequence of homogeneous elements of a noetherian graded algebra $S=\oplus_{n\geq 0}S_{n}$ over a local ring $S_{0}$. It is called \emph{filter-regular sequence} of $S$ if $f_{i}\not\in \mathfrak{p}$ for all primes $\mathfrak{p}\in \text{Ass}(S/(f_{1},...,f_{i-1}))$ such that $S_{+}\not\subseteq \mathfrak{p}$, $i=1,...,r$.

Let $(A, \mathfrak{m})$ be is a local ring and $\mathfrak{F}$ a good filtration. Then $v_{1},...,v_{t}\in I_{1}$ are \emph{analytically independent} in $\mathfrak{F}$ if and only if, whenever $h\in \mathbb{N}$ and $f\in A[X_{1},...,X_{t}]$ (the ring of polynomials over $A$ in $t$ indeterminates) is a homogeneous polynomial of degree $h$ such that $f(v_{1},...,v_{t})\in I_{h}\mathfrak{m}$, then all coefficients of $f$ lie in $\mathfrak{m}$. Moreover, if $v_{1},...,v_{t}\in I_{1}$ are analytically independent in $\mathfrak{F}$, and $J=(v_{1},...,v_{t})$, then $J^{h}\cap I_{h}\mathfrak{m}=J^{h}\mathfrak{m}$ for all $h\in \mathbb{N}$.

Now let define the analytic spread of a filtration $\mathfrak{F}$. The number $s=s(\mathfrak{F})=\dim R(\mathfrak{F})/\mathfrak{m}R(\mathfrak{F})$ \cite{HZ} is said to be the \emph{analytic spread} of $\mathfrak{F}$. Thus, when $\mathfrak{F}$ is the $I$-adic filtration, the analytic spread $s(I)$ equals $s(\mathfrak{F})$. Rees introduced the notion of basic reductions of Noetherian filtrations and he showed in \cite[Theorem 6.12]{R2} that $s(\mathfrak{F})$ equals the minimal number of generators of any minimal reduction. By \cite[Lemma 2.7]{HZ}, $s(\mathfrak{F})=\dim G(\mathfrak{F})/\mathfrak{m}G(\mathfrak{F})$ and by \cite[Lemma 2.8]{HZ}, $s(\mathfrak{F})=s(I_{1})$.

Let $S=\bigoplus_{n\geq 0}S_{n}$ be a finitely generated standard graded ring over a Noetherian commutative ring $S_{0}$. For any graded $S$-module $M$ we denote by $M_{n}$ the homogeneous part of degre $\emph{n}$ of $M$ and we define
$$
a(M):= \left\{ \begin{array}{ll}
           \max \{n \ | \ M_{n}\neq 0 \} & \text{if}  \  M \neq 0 \\
           -\infty & \text{if} \ M = 0
         \end{array}
         \right
 .$$
Let $S_{+}$ be the ideal generated by the homogeneous elements of positive degree of $S$. For $i\geq 0$, set $$a_{i}(S):=a(H_{S_{+}}^{i}(S)),$$ where $H_{S_{+}}^{i}(.)$ denotes the \emph{i}-th local cohomology functor with respect to the ideal $S_{+}$. More generally, for $i\geq 0$ and any graded $S$-module $M$, set
$$a_{i}(M):=a(H_{S_{+}}^{i}(M)),$$ where $H_{S_{+}}^{i}(M)$ denotes the \emph{i}-th local cohomology module of M with respect to the irrelevant ideal $S_{+}$. The \emph{Castelnuovo-Mumford regularity} (or simply \emph{regularity}) of $M$ is defined as the number $$\text{reg}(M):= \max\{a_{i}(M)+i \ | \ i\geq 0\}.$$
When $M=S$, the regularity $\text{reg} \ S$ is an important invariant of the graded ring $S$, \cite{EG} and \cite{O2}.

\section{Regularity of the Rees Algebra and the Associated Graded Ring for good filtrations}
In the paper \cite{T}, Trung show that there is a close relationship between the invariants characterizing the homogeneous vanishing of the local cohomology of the Rees algebra and the associated graded ring of an ideal. Oishi prove in \cite{O} that the Castelnuovo-Mumford regularity of the associated graded ring and the associated graded ring of an ideal are equal, i.e, $\text{reg} \ G(I)= \text{reg} \ R(I)$. Johnson and Ulrich \cite{JU} rediscovered this equality. Afterwards, Trung \cite{T} also showed this equality in a Corollary. In this section we verify generalizations of these results for good filtration case.

Let $\mathfrak{F}$ a good filtration over a ring $A$. We consider the ring $A$ as a graded ring concentrated in degree zero. Now, consider the following exact sequences
\begin{equation}\label{exac1}
0\rightarrow R(\mathfrak{F})_{+} \rightarrow R(\mathfrak{F}) \rightarrow A \rightarrow 0 ;
\end{equation}\vspace{-0.4cm}
\begin{equation}\label{exac2}
0\rightarrow R(\mathfrak{F})_{+}(1) \rightarrow R(\mathfrak{F}) \rightarrow G(\mathfrak{F}) \rightarrow 0.
\end{equation}

\begin{lem}\label{lema1}
Let $\mathfrak{F}$ a filtration over a ring $A$. We have
$$H_{R(\mathfrak{F})_{+}}^{i}(R(\mathfrak{F}))_{n}=0 \ para \ n\geq \max \{0, \ a_{i}(G(\mathfrak{F}))+1\}\ if\ i=0,1 $$
$$and \ for \ n\geq a_{i}(G(\mathfrak{F}))+1 \ if \ i\geq 2. $$
\end{lem}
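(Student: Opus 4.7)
The strategy is to extract information about $H^{i}_{R(\mathfrak{F})_{+}}(R(\mathfrak{F}))$ from the two short exact sequences (\ref{exac1}) and (\ref{exac2}) via the associated long exact sequences in local cohomology, followed by descending induction on the degree.

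First, I would process (\ref{exac1}). Because $R(\mathfrak{F})_{+}$ is the kernel of the augmentation $R(\mathfrak{F})\to A$, it acts as zero on $A$; together with the fact that $A$ is placed in degree zero, this gives $H^{0}_{R(\mathfrak{F})_{+}}(A)=A$ concentrated in degree $0$ and $H^{i}_{R(\mathfrak{F})_{+}}(A)=0$ for all $i\geq 1$. The long exact sequence attached to (\ref{exac1}) therefore yields the degree-wise identification
\[
H^{i}_{R(\mathfrak{F})_{+}}(R(\mathfrak{F})_{+})_{m}\;\cong\;H^{i}_{R(\mathfrak{F})_{+}}(R(\mathfrak{F}))_{m}
\]
for every $i\geq 2$ and every $m$, and for $i=0,1$ whenever $m\geq 1$ (in degree $0$ for these low $i$ the $A$-term obstructs the isomorphism).

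Next, from (\ref{exac2}) and the shift identity $H^{i}_{R(\mathfrak{F})_{+}}(R(\mathfrak{F})_{+}(1))_{n}=H^{i}_{R(\mathfrak{F})_{+}}(R(\mathfrak{F})_{+})_{n+1}$, the long exact sequence in degree $n$ contains the exact row
\[
H^{i-1}_{R(\mathfrak{F})_{+}}(G(\mathfrak{F}))_{n}\to H^{i}_{R(\mathfrak{F})_{+}}(R(\mathfrak{F})_{+})_{n+1}\to H^{i}_{R(\mathfrak{F})_{+}}(R(\mathfrak{F}))_{n}\to H^{i}_{R(\mathfrak{F})_{+}}(G(\mathfrak{F}))_{n}.
\]
Using the first step, the left-hand local cohomology of $R(\mathfrak{F})_{+}$ in this row gets replaced by $H^{i}_{R(\mathfrak{F})_{+}}(R(\mathfrak{F}))_{n+1}$, automatically for all $n$ when $i\geq 2$, and only for $n\geq 0$ when $i=0,1$ (this is exactly where the condition $n\geq\max\{0,a_{i}(G(\mathfrak{F}))+1\}$ enters).

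Finally I would perform descending induction on $n$. For $n\geq a_{i}(G(\mathfrak{F}))+1$ the right-hand term $H^{i}_{R(\mathfrak{F})_{+}}(G(\mathfrak{F}))_{n}$ vanishes, which turns the row into a surjection $H^{i}_{R(\mathfrak{F})_{+}}(R(\mathfrak{F}))_{n+1}\twoheadrightarrow H^{i}_{R(\mathfrak{F})_{+}}(R(\mathfrak{F}))_{n}$ (and into an isomorphism when $i=0$, since the leftmost term is already zero there). Combining this with the standard vanishing $H^{i}_{R(\mathfrak{F})_{+}}(R(\mathfrak{F}))_{N}=0$ for $N\gg 0$, descending induction from such a large $N$ propagates the vanishing down to every $n$ in the stated range.

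The main obstacle I expect is simply careful bookkeeping of the degree shift in (\ref{exac2}) and its interaction with the failure of the identification $H^{i}_{R(\mathfrak{F})_{+}}(R(\mathfrak{F})_{+})\cong H^{i}_{R(\mathfrak{F})_{+}}(R(\mathfrak{F}))$ in degree zero for $i=0,1$: this is precisely what forces the threshold $\max\{0,a_{i}(G(\mathfrak{F}))+1\}$ rather than the cleaner bound $a_{i}(G(\mathfrak{F}))+1$ that works for $i\geq 2$.
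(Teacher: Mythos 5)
Your proposal is correct and follows essentially the same route as the paper: identify $H^{i}(R(\mathfrak{F})_{+})$ with $H^{i}(R(\mathfrak{F}))$ via (\ref{exac1}) outside the problematic degree-zero, low-$i$ cases, use the shifted sequence (\ref{exac2}) together with the vanishing of $H^{i}(G(\mathfrak{F}))_{n}$ for $n\geq a_{i}(G(\mathfrak{F}))+1$ to get surjections $H^{i}(R(\mathfrak{F}))_{n+1}\twoheadrightarrow H^{i}(R(\mathfrak{F}))_{n}$, and conclude by descending induction from the asymptotic vanishing $H^{i}(R(\mathfrak{F}))_{n}=0$ for $n\gg 0$. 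Your bookkeeping of the degree shift and of the threshold $\max\{0,a_{i}(G(\mathfrak{F}))+1\}$ for $i=0,1$ matches the paper's argument.
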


\begin{proof}
We denote $H^{i}(.)=H_{R(\mathfrak{F})_{+}}^{i}(.)$. As $H^{0}(A)=A$ and $H^{i}(A)=0$ for $i\geq1$, from the exact sequence \eqref{exac1} we have \begin{equation}\label{iso1} H^{i}(R(\mathfrak{F})_{+})_{n}\simeq H^{i}(R(\mathfrak{F}))_{n} \ \text{for} \ n=0, \ i\geq2,\ \text{and} \ \text{for} \ n\neq 0, \ i\geq 0.\end{equation} As $H_{G(\mathfrak{F})_{+}}^{i}(G(\mathfrak{F}))=H^{i}(G(\mathfrak{F}))$, the exact sequence \eqref{exac2} induces the long exact sequence
$$
H^{i}(R(\mathfrak{F})_{+})_{n+1} \rightarrow H^{i}(R(\mathfrak{F}))_{n}\rightarrow H^{i}(G(\mathfrak{F}))_{n}\rightarrow H^{i+1}(R(\mathfrak{F})_{+})_{n+1}.
$$
Then we have a surjective map
$$
\begin{array}{c}
  H^{i}(R(\mathfrak{F})_{+})_{n+1} \rightarrow H_{i}(R(\mathfrak{F}))_{n} \ \text{for} \ n\geq \max \{0,a_{i}(G(\mathfrak{F}))+1\} \ \text{if} \ i=0,1 \vspace{0.3cm}\\
  \text{and} \ \text{for} \ n\geq a_{i}(G(\mathfrak{F}))+1 \ \text{if} \ i\geq2.
\end{array}
$$
By the fact that $H^{i}(R(\mathfrak{F}))_{n}=0$ for $n\gg 0$, we can conclude

\begin{equation}
\begin{array}{c}
  H^{i}(R(\mathfrak{F}))_{n}=0 \ \text{for} \ n\geq \max \{0,a_{i}(G(\mathfrak{F}))+1\} \ \text{if} \ i=0,1 \vspace{0.3cm}\\
  \text{and} \ \text{for} \ n\geq a_{i}(G(\mathfrak{F}))+1 \ \text{if} \ i\geq2.
\end{array}
\end{equation}

\end{proof}

\begin{thm}\label{teo3.1}
Let $\mathfrak{F}$ a filtration over a ring $A$. Then
\begin{itemize}
\item [\emph{(i)}] $a_{i}(R(\mathfrak{F})) \leq a_{i}(G(\mathfrak{F})), \ i\neq 1;$
\item [\emph{(ii)}] $a_{i}(R(\mathfrak{F})) = a_{i}(G(\mathfrak{F})) \  if \ a_{i}(G(\mathfrak{F})) \geq a_{i+1}(G(\mathfrak{F})), \ i\neq 1;$
\item [\emph{(iii)}] $The \ statements \ \emph{(i)} \ and \ \emph{(ii)} \ are \ true \ for \ i=1 \ if \ H_{G(\mathfrak{F})_{+}}^{1}(G(\mathfrak{F})) \neq 0 \ or \ if \ I_{1}\subseteq \sqrt{0} ;$
\item [\emph{(iv)}] $a_{1}(R(\mathfrak{F}))=-1 \ if \ H_{G(\mathfrak{F})_{+}}^{1}(G(\mathfrak{F})) = 0 \  and \ I_{1} \not\subseteq \sqrt{0} ;$
\end{itemize}
\end{thm}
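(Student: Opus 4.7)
The plan is to extract the statements from the long exact sequences in $H^{i} = H^{i}_{R(\mathfrak{F})_{+}}$ induced by the exact sequences \eqref{exac1} and \eqref{exac2}, combined with Lemma \ref{lema1}. From \eqref{exac1} one obtains the identifications $H^{i}(R(\mathfrak{F})_{+})_{n} \cong H^{i}(R(\mathfrak{F}))_{n}$ for $n \neq 0$, and in all degrees once $i \geq 2$; from \eqref{exac2} one obtains the four-term exact sequence
\[
H^{i}(R(\mathfrak{F})_{+})_{n+1} \to H^{i}(R(\mathfrak{F}))_{n} \to H^{i}(G(\mathfrak{F}))_{n} \to H^{i+1}(R(\mathfrak{F})_{+})_{n+1},
\]
which is the main engine.

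For (i) with $i \geq 2$, Lemma \ref{lema1} already gives $H^{i}(R(\mathfrak{F}))_{n} = 0$ for $n > a_{i}(G(\mathfrak{F}))$, whence $a_{i}(R(\mathfrak{F})) \leq a_{i}(G(\mathfrak{F}))$. The case $i = 0$ uses the same bound together with the fact that $H^{0}(R(\mathfrak{F}))$ is non-negatively graded, which handles the residual possibility $a_{0}(G(\mathfrak{F})) = -\infty$. For (ii), set $n = a_{i}(G(\mathfrak{F}))$; the hypothesis together with (i) applied at index $i+1$ gives $H^{i+1}(R(\mathfrak{F}))_{n+1} = 0$, and the identification of $H^{i+1}(R(\mathfrak{F})_{+})_{n+1}$ with $H^{i+1}(R(\mathfrak{F}))_{n+1}$ is available either because $i+1 \geq 2$, or, for $i = 0$, because $H^{0}(G(\mathfrak{F})) \neq 0$ forces $n = a_{0}(G(\mathfrak{F})) \geq 0$. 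Then the four-term sequence above makes $H^{i}(R(\mathfrak{F}))_{n} \twoheadrightarrow H^{i}(G(\mathfrak{F}))_{n} \neq 0$ surjective, giving $a_{i}(R(\mathfrak{F})) \geq n$.

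Parts (iii) and (iv) handle $i = 1$, where the obstruction is the extra term $A$ at degree zero in \eqref{exac1}; explicitly, that sequence produces
\[
H^{0}(R(\mathfrak{F}))_{0} \longrightarrow A \longrightarrow H^{1}(R(\mathfrak{F})_{+})_{0} \longrightarrow H^{1}(R(\mathfrak{F}))_{0} \longrightarrow 0,
\]
and the image of the first map is the $I_{1}$-torsion submodule $(0 :_{A} I_{1}^{\infty})$. If $I_{1} \subseteq \sqrt{0}$, then $R(\mathfrak{F})$ is itself $R(\mathfrak{F})_{+}$-torsion (since $I_{n} \subseteq I_{1}^{n-k}$ eventually), which kills $H^{1}(R(\mathfrak{F}))$ and $H^{1}(G(\mathfrak{F}))$ and makes (i)-(ii) trivial. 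Otherwise $A / (0 :_{A} I_{1}^{\infty}) \neq 0$, so $H^{1}(R(\mathfrak{F})_{+})_{0} \neq 0$. Under (iv)'s hypothesis $H^{1}(G(\mathfrak{F})) = 0$, the four-term sequence propagates this nonvanishing down into $H^{1}(R(\mathfrak{F}))_{-1} \neq 0$ (using that $H^{0}(G(\mathfrak{F}))$ vanishes in negative degrees), which combined with the bound $a_{1}(R(\mathfrak{F})) \leq -1$ from Lemma \ref{lema1} yields $a_{1}(R(\mathfrak{F})) = -1$. Under (iii)'s remaining hypothesis $H^{1}(G(\mathfrak{F})) \neq 0$, one verifies $a_{1}(G(\mathfrak{F})) \geq 0$ from the standard-graded structure of $G(\mathfrak{F})$, so Lemma \ref{lema1} sharpens to give (i), and the surjection argument of (ii) carries over.

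The principal obstacle is the degree-zero bookkeeping in (iii)-(iv): one must verify both that the extra $A$-contribution in \eqref{exac1} propagates to $H^{1}(R(\mathfrak{F}))_{-1}$ precisely and no further, and that the dichotomy $I_{1} \subseteq \sqrt{0}$ versus $I_{1} \not\subseteq \sqrt{0}$ is exactly the one that governs when this extra contribution is visible; the delicate point is checking that $H^{1}(G(\mathfrak{F})) \neq 0$ really does force $a_{1}(G(\mathfrak{F})) \geq 0$, which rules out a competing source of negative-degree content on the $G(\mathfrak{F})$-side.
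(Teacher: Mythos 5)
Most of your argument runs along the same lines as the paper's (the two exact sequences \eqref{exac1}, \eqref{exac2} together with Lemma \ref{lema1}), and your treatment of (iv) is correct and in fact a slightly cleaner, direct version of the paper's proof by contradiction: you exhibit $A/(0:_{A}I_{1}^{\infty})\neq 0$ inside $H^{1}(R(\mathfrak{F})_{+})_{0}$ and transfer it to $H^{1}(R(\mathfrak{F}))_{-1}$ using that $H^{0}(G(\mathfrak{F}))$ and $H^{1}(G(\mathfrak{F}))$ vanish in degree $-1$. The genuine gap is in (iii): your key claim that $H^{1}_{G(\mathfrak{F})_{+}}(G(\mathfrak{F}))\neq 0$ forces $a_{1}(G(\mathfrak{F}))\geq 0$ is false. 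For example, if $A$ is a one-dimensional regular local ring and $\mathfrak{F}$ is the $\mathfrak{m}$-adic filtration, then $G(\mathfrak{F})\cong k[T]$, so $H^{1}_{G_{+}}(G)\cong k[T,T^{-1}]/k[T]\neq 0$ while $a_{1}(G(\mathfrak{F}))=-1$. What your argument (via Lemma \ref{lema1}) actually needs is only $a_{1}(G(\mathfrak{F}))\geq -1$, so that $\max\{0,a_{1}(G(\mathfrak{F}))+1\}=a_{1}(G(\mathfrak{F}))+1$; this weaker bound is true, but it is not an immediate consequence of the standard grading — it is exactly the nontrivial input the paper imports as \cite[Corollary 2.3(iii)]{T} (one can also prove it by passing to $G/H^{0}_{G_{+}}(G)$, choosing a degree-one filter-regular element $x$ after reducing to an infinite residue field, and noting that $x$ acts injectively on $H^{1}$ in degrees $\leq -1$ because $H^{0}(G/xG)$ is concentrated in non-negative degrees). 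As written, the very point you flag as delicate is both misstated and unproved, so the case $H^{1}(G(\mathfrak{F}))\neq 0$ of (iii) does not go through.

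A smaller slip: in (ii) for $i=0$ you justify $H^{1}(R(\mathfrak{F}))_{a_{0}(G(\mathfrak{F}))+1}=0$ by ``(i) applied at index $i+1$,'' but (i) is not asserted for $i=1$ (and can genuinely fail there, as (iv) shows). The vanishing you need is nevertheless available directly from Lemma \ref{lema1}: since $a_{0}(G(\mathfrak{F}))\geq 0$ and, by hypothesis, $a_{0}(G(\mathfrak{F}))\geq a_{1}(G(\mathfrak{F}))$, the degree $a_{0}(G(\mathfrak{F}))+1$ lies above $\max\{0,a_{1}(G(\mathfrak{F}))+1\}$; this is precisely the two-case argument the paper carries out. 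With these two repairs your proof coincides with the paper's.
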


\begin{proof}
We denote $H^{i}(.)=H_{R(\mathfrak{F})_{+}}^{i}(.)$.
From Lemma above, $a_{i}(R(\mathfrak{F})) \leq a_{i}(G(\mathfrak{F}))$ for $i\geq 2.$ For $i=0$, we have two cases. If $H^{0}(G(\mathfrak{F}))=0$, $a_{0}(G(\mathfrak{F}))=-\infty$. Hence, by using the Lemma \ref{lema1}, $H^{0}(R(\mathfrak{F}))_{n}=0$ para $n\geq0$. But $H^{0}(R(\mathfrak{F}))\subseteq R(\mathfrak{F})$ is a ideal, then $H^{0}(R(\mathfrak{F}))=0$. It follow that $a_{0}(R(\mathfrak{F}))=a_{0}(G(\mathfrak{F}))=-\infty$. If $H^{0}(G(\mathfrak{F}))_{n} \neq 0$, since $H^{0}(G(\mathfrak{F}))\subseteq G(\mathfrak{F})$ is a ideal, $a_{0}(G(\mathfrak{F}))\geq 0$. Hence by the Lemma \ref{lema1}, $$H^{0}(R(\mathfrak{F}))_{n}=0 \ \text{for} \ n \geq a_{0}(G(\mathfrak{F}))+1.$$ It implies $a_{0}(R(\mathfrak{F})) \leq a_{0}(G(\mathfrak{F}))$. (i) is proved.

Let show (ii): By using (i), it is enough to show $a_{i}(R(\mathfrak{F})) \geq a_{i}(G(\mathfrak{F}))$.
Clearly we can suppose $a_{i}(G(\mathfrak{F})) \neq -\infty$. If $i=0$, $H^{0}(G(\mathfrak{F})) \neq 0$ and then $a_{0}(G(\mathfrak{F}))\geq 0$. Due to \eqref{exac2} and \eqref{iso1}, $$H^{0}(R(\mathfrak{F}))_{a_{0}(G(\mathfrak{F}))} \rightarrow H^{0}(G(\mathfrak{F}))_{a_{0}(G(\mathfrak{F}))} \rightarrow H^{1}(R(\mathfrak{F}))_{a_{0}(G(\mathfrak{F}))+1}.$$ By the Lemma \ref{lema1}, we have either $a_{1}(G(\mathfrak{F}))\leq -1$ or $a_{1}(R(\mathfrak{F})) \leq a_{1}(G(\mathfrak{F})).$ In the first case, since $a_{0}(G(\mathfrak{F})) \geq 0 > -1 \geq a_{1}(R(\mathfrak{F}))$, $H^{1}(R(\mathfrak{F}))_{a_{0}(G(\mathfrak{F}))+1}=0$. Then $$H^{0}(R(\mathfrak{F}))_{a_{0}(G(\mathfrak{F}))} \rightarrow H^{0}(G(\mathfrak{F}))_{a_{0}(G(\mathfrak{F}))} \rightarrow 0.$$ Since $H^{0}(G(\mathfrak{F}))_{a_{0}(G(\mathfrak{F}))} \neq 0$, it follows that $H^{0}(R(\mathfrak{F}))_{a_{0}(G(\mathfrak{F}))}\neq 0$, i.e, $a_{0}(R(\mathfrak{F})) \geq a_{0}(G(\mathfrak{F})).$ For the second case by using the hypothesis $a_{0}(G(\mathfrak{F})) \geq a_{1}(G(\mathfrak{F}))$ we have $a_{1}(R(\mathfrak{F})) \leq a_{0}(G(\mathfrak{F}))$ so that $H^{1}(R(\mathfrak{F}))_{a_{0}(G(\mathfrak{F}))+1}=0$. It follows similarly to first case that $a_{0}(R(\mathfrak{F})) \geq a_{0}(G(\mathfrak{F}))$. Now if $i\geq 1$, by (i), $a_{i+1}(G(\mathfrak{F})) \geq a_{i+1}(R(\mathfrak{F})) $. From hypothesis, $a_{i+1}(R(\mathfrak{F}))\leq a_{i}(G(\mathfrak{F}))$. Then $$H^{i+1}(R(\mathfrak{F})_{+})_{a_{i}(G(\mathfrak{F}))+1}\simeq H^{i+1}(R(\mathfrak{F}))_{a_{i}(G(\mathfrak{F}))+1}=0.$$ From the exact sequence \eqref{exac2}, we have a surjective map $$H^{i}(R(\mathfrak{F}))_{a_{i}(G(\mathfrak{F}))}\rightarrow H^{i}(G(\mathfrak{F}))_{a_{i}(G(\mathfrak{F}))}.$$ Since $H^{i}(G(\mathfrak{F}))_{a_{i}(G(\mathfrak{F}))}\neq 0$, $H^{i}(R(\mathfrak{F}))_{a_{i}(G(\mathfrak{F}))} \neq 0$, i.e,  $a_{i}(R(\mathfrak{F}))\geq a_{i}(G(\mathfrak{F}))$.

Now let prove (iii). If $H_{G(\mathfrak{F})_{+}}^{1}(G(\mathfrak{F}))\neq 0$. By \cite[Corollary 2.3(iii)]{T} $a_{1 }(G(\mathfrak{F}))+1\geq 0$. Hence by using Lemma \ref{lema1}, $H^{1}(R(\mathfrak{F}))_{n}=0$ for $n \geq a_{1}(G(\mathfrak{F}))+1$. Then $a_{1}(R(\mathfrak{F}))\leq a_{1}(G(\mathfrak{F}))$.

If $I_{1}\subseteq \sqrt{0}$, $R(\mathfrak{F})_{+} \subseteq \sqrt{0}$ and $G(\mathfrak{F})_{+}\subseteq \sqrt{0}$. We have $\text{ara}(R(\mathfrak{F})_{+})=\text{ara}(\sqrt{R(\mathfrak{F})_{+}})=\text{ara}(\sqrt{0})=0$. Similarly, $\text{ara}(G(\mathfrak{F})_{+})=0$. By \cite[Corollary 3.3.3]{BS},
$$
\begin{array}{c}
  H^{i}(R(\mathfrak{F}))=0 \ \text{for} \ i>\text{ara}(R(\mathfrak{F}))=0 \vspace{0.1cm}\\
  \text{and} \vspace{0.1cm}\\
  H^{i}(G(\mathfrak{F}))=0 \ \text{for} \ i>\text{ara}(G(\mathfrak{F}))=0.
\end{array}
$$
In particular $a_{1}(R(\mathfrak{F}))=a_{1}(G(\mathfrak{F}))=-\infty .$

Now, let prove (iv). By the hypothesis, $H^{1}(G(\mathfrak{F}))=0$ and $I_{1}\not\subseteq \sqrt{0}$. Then $a_{1}(G(\mathfrak{F}))=-\infty$. By the Lemma \ref{lema1}, $a_{1}(R(\mathfrak{F}))\leq -1$. Let suppose that $a_{1}(R(\mathfrak{F}))<-1$. Then $H^{1}(R(\mathfrak{F}))_{-1}=0$. As $H^{0}(G(\mathfrak{F}))_{-1}=0$ since $H^{0}(G(\mathfrak{F}))\subseteq G(\mathfrak{F})$. It follow that $$H^{0}(G(\mathfrak{F}))_{-1}\rightarrow H^{1}(R(\mathfrak{F})_{+})_{0}\rightarrow H^{1}(R(\mathfrak{F}))_{-1}.$$ Then $H^{1}(R(\mathfrak{F})_{+})_{0}=0$. From the exact sequence \eqref{exac1}, $$H^{0}(R(\mathfrak{F})_{+})_{0}\rightarrow H^{0}(R(\mathfrak{F}))_{0}\rightarrow H^{0}(A) \rightarrow 0.$$ But $H^{0}(R(\mathfrak{F})_{+})\subseteq R(\mathfrak{F})_{+}$ and $(R(\mathfrak{F})_{+})_{0}=0$. Then $H^{0}(R(\mathfrak{F})_{+})_{0}=0.$ It is easy to show $H^{0}(R(\mathfrak{F}))_{0}=H_{I_{1}}^{0}(A)$. We also know that $H^{0}(A)=A$. From exact sequence above $H_{I_{1}}^{0}(A)=A$ and this implies that $I_{1}^{n}=0$ for some $n\geq 1$. By hypothesis it is a contradiction. Therefore $a_{1}(R(\mathfrak{F}))=-1$.
\end{proof}

\begin{cor}

Define $\ell:=\max\{i:H_{G(\mathfrak{F})_{+}}^{i}(G(\mathfrak{F}))\neq 0\}$. Then
\begin{itemize}

\item [\emph{(i)}] $a_{\ell}(R(\mathfrak{F}))=a_{\ell}(G(\mathfrak{F}))$
\item [\emph{(ii)}] $\ell=\max\{i:H_{R(\mathfrak{F})_{+}}^{i}(R(\mathfrak{F}))\neq 0\}$ if $I_{1}\subseteq \sqrt{0}$ or $\ell\geq 1$.

\end{itemize}

\end{cor}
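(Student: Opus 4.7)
The plan is to derive both parts directly from Theorem \ref{teo3.1}, with the understanding that the cases $\ell=1$ and $\ell\neq 1$ must be treated separately because the theorem handles $i=1$ with extra hypotheses.

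First I would prove (i). By the definition of $\ell$ we have $a_{\ell}(G(\mathfrak{F}))\neq -\infty$ and $a_{\ell+1}(G(\mathfrak{F}))=-\infty$, so the monotonicity condition $a_{\ell}(G(\mathfrak{F}))\geq a_{\ell+1}(G(\mathfrak{F}))$ is automatic. If $\ell\neq 1$, Theorem \ref{teo3.1}(ii) immediately yields $a_{\ell}(R(\mathfrak{F}))=a_{\ell}(G(\mathfrak{F}))$. If $\ell=1$, the defining condition $H^{1}_{G(\mathfrak{F})_{+}}(G(\mathfrak{F}))\neq 0$ is precisely the hypothesis of Theorem \ref{teo3.1}(iii), which activates statement (ii) also at $i=1$ and gives the same equality.

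For (ii), write $\ell':=\max\{i:H^{i}_{R(\mathfrak{F})_{+}}(R(\mathfrak{F}))\neq 0\}$. The inequality $\ell'\geq \ell$ is immediate from part (i): $a_{\ell}(R(\mathfrak{F}))=a_{\ell}(G(\mathfrak{F}))\neq -\infty$ forces $H^{\ell}_{R(\mathfrak{F})_{+}}(R(\mathfrak{F}))\neq 0$. For the reverse inequality I would show $H^{i}_{R(\mathfrak{F})_{+}}(R(\mathfrak{F}))=0$ for every $i>\ell$. When $i\geq 2$ this is immediate from Theorem \ref{teo3.1}(i), since $a_{i}(R(\mathfrak{F}))\leq a_{i}(G(\mathfrak{F}))=-\infty$. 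The only remaining case is $i=1$ with $\ell=0$; here the hypothesis $I_{1}\subseteq \sqrt{0}$ enters, and Theorem \ref{teo3.1}(iii) (in the branch whose proof uses the arithmetic-rank argument to conclude $H^{1}(R(\mathfrak{F}))=0$) gives $a_{1}(R(\mathfrak{F}))=-\infty$. If instead $\ell\geq 1$, the value $i=1$ never appears in the range $i>\ell$, and no further argument is needed.

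The main subtlety, rather than an obstacle, is recognizing why the disjunctive hypothesis ``$I_{1}\subseteq \sqrt{0}$ or $\ell\geq 1$'' in (ii) is genuinely needed: Theorem \ref{teo3.1}(iv) shows that when $H^{1}_{G(\mathfrak{F})_{+}}(G(\mathfrak{F}))=0$ and $I_{1}\not\subseteq \sqrt{0}$ one has $a_{1}(R(\mathfrak{F}))=-1\neq -\infty$, so $\ell'\geq 1>0=\ell$ in that situation, falsifying the equality. Thus the two allowed alternatives in the hypothesis correspond exactly to the two cases in Theorem \ref{teo3.1}(iii) that force the vanishing indices to agree at $i=1$, and the proof essentially reduces to organising this case analysis cleanly.
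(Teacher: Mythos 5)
Your argument is correct and follows essentially the same route as the paper: part (i) by applying Theorem \ref{teo3.1}(ii) when $\ell\neq 1$ and (iii) when $\ell=1$ (the monotonicity hypothesis being automatic at $i=\ell$), and part (ii) by the case analysis on indices $i>\ell$, with the hypothesis $I_{1}\subseteq\sqrt{0}$ covering the only delicate case $\ell=0$, $i=1$. The only cosmetic difference is that for $i\geq 2$ you invoke Theorem \ref{teo3.1}(i) directly where the paper argues by contradiction via (ii), and your closing remark on the necessity of the disjunctive hypothesis (via Theorem \ref{teo3.1}(iv)) is a correct observation not made explicit in the paper.
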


\begin{proof}
For $i \geq \ell$ we have $a_{i+1}(G(\mathfrak{F}))=-\infty$. Thus we always have $a_{i}(G(\mathfrak{F}))\geq a_{i+1}(G(\mathfrak{F}))$.

If $i\neq 1$, by the Theorem \ref{teo3.1}, $a_{i}(R(\mathfrak{F}))=a_{i}(G(\mathfrak{F}))$. In this way, if $\ell>1$, $a_{\ell}(R(\mathfrak{F}))=a_{\ell}(G(\mathfrak{F}))$. If $H_{R(\mathfrak{F})_{+}}^{\ell}(R(\mathfrak{F}))=0$, $a_{\ell}(R(\mathfrak{F}))=-\infty$. Then $a_{\ell}(G(\mathfrak{F}))=-\infty.$ A contradiction. Thus $$H_{R(\mathfrak{F})_{+}}^{\ell}(R(\mathfrak{F}))\neq 0.$$ If there exist $j>\ell$ such that $H_{R(\mathfrak{F})_{+}}^{j}(R(\mathfrak{F}))\neq 0$ ($a_{j}(R(\mathfrak{F}))\neq -\infty$), by the Theorem \ref{teo3.1}(ii), $a_{j}(G(\mathfrak{F}))=a_{j}(R(\mathfrak{F}))$. A contradiction, since $H_{G(\mathfrak{F})_{+}}^{j}(G(\mathfrak{F}))=0$. Therefore if $\ell > 1$, $$\ell=\max\{i:H_{R(\mathfrak{F})_{+}}^{i}(R(\mathfrak{F}))\neq 0\}.$$

If $\ell=1$, $H_{G(\mathfrak{F})_{+}}^{1}(G(\mathfrak{F}))\neq 0$. By the Theorem \ref{teo3.1}(iii), $a_{1}(R(\mathfrak{F}))=a_{1}(G(\mathfrak{F}))$. If $H_{R(\mathfrak{F})_{+}}^{1}(R(\mathfrak{F}))=0$ then $a_{1}(R(\mathfrak{F}))=-\infty$. It implies $H_{G(\mathfrak{F})_{+}}^{1}(G(\mathfrak{F}))=0$. A contradiction. Thus $H_{R(\mathfrak{F})_{+}}^{1}(R(\mathfrak{F}))\neq 0$. If there exist $j>1$ such that $H_{R(\mathfrak{F})_{+}}^{j}(R(\mathfrak{F}))\neq 0$, by the Theorem \ref{teo3.1}(ii),
$$
a_{j}(R(\mathfrak{F}))=a_{j}(G(\mathfrak{F}))\neq -\infty.
$$
A contradiction.

Now let prove the last case. Let suppose $\ell=0$ and $I_{1}\subseteq \sqrt{0}$. From the Theorem \ref{teo3.1}(ii), $a_{0}(R(\mathfrak{F}))=a_{0}(G(\mathfrak{F}))$. If $H_{R(\mathfrak{F})_{+}}^{0}(R(\mathfrak{F}))=0$ then $a_{0}(R(\mathfrak{F}))=-\infty$ and it implies that $H_{G(\mathfrak{F})_{+}}^{0}(G(\mathfrak{F}))=0$. A contradiction. The rest is similar.
\end{proof}

\begin{cor}\label{cor}
Let $A$ a Noetherian local ring and $\mathfrak{F}$ a good filtration. Then $$\emph{reg} \ R(\mathfrak{F})=\emph{reg} \ G(\mathfrak{F}).$$
\end{cor}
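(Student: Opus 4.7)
The strategy is to read Theorem \ref{teo3.1} off index by index, exactly as Trung's Corollary~3.3 of \cite{T} does in the $I$-adic case. Since by definition
\[
\text{reg}\,M = \max_{i\geq 0}\bigl(a_i(M) + i\bigr),
\]
the assertion reduces to the equality of two such maxima.

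For $\text{reg}\,R(\mathfrak{F}) \geq \text{reg}\,G(\mathfrak{F})$, fix $j$ at which the maximum for $G(\mathfrak{F})$ is attained. Maximality forces $a_j(G(\mathfrak{F})) > a_{j+1}(G(\mathfrak{F}))$, which is precisely the hypothesis of Theorem \ref{teo3.1}(ii) when $j \neq 1$; when $j = 1$, the value $a_1(G(\mathfrak{F}))$ is finite, so $H^1_{G(\mathfrak{F})_+}(G(\mathfrak{F})) \neq 0$, and part (iii) activates (ii) at $j = 1$ as well. In every case $a_j(R(\mathfrak{F})) = a_j(G(\mathfrak{F}))$, which gives $\text{reg}\,R(\mathfrak{F}) \geq a_j(R(\mathfrak{F})) + j = \text{reg}\,G(\mathfrak{F})$.

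For the reverse inequality, bound $a_i(R(\mathfrak{F})) + i$ by $\text{reg}\,G(\mathfrak{F})$ for every $i$. If $i \neq 1$, Theorem \ref{teo3.1}(i) gives $a_i(R(\mathfrak{F})) \leq a_i(G(\mathfrak{F}))$ and the estimate follows. If $i = 1$ and either $H^1_{G(\mathfrak{F})_+}(G(\mathfrak{F})) \neq 0$ or $I_1 \subseteq \sqrt{0}$, then part (iii) again gives $a_1(R(\mathfrak{F})) \leq a_1(G(\mathfrak{F}))$; otherwise part (iv) pins down $a_1(R(\mathfrak{F})) = -1$, so the contribution is exactly $0$.

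The main obstacle is therefore this last configuration: one has to verify $\text{reg}\,G(\mathfrak{F}) \geq 0$ so that the contribution $a_1(R(\mathfrak{F})) + 1 = 0$ does not exceed $\text{reg}\,G(\mathfrak{F})$. In the (iv) regime the hypothesis contains $I_1 \not\subseteq \sqrt{0}$; since $A$ is local Noetherian and $\mathfrak{F}$ is Noetherian, Nakayama implies that $G(\mathfrak{F})_+$ is non-nilpotent in $G(\mathfrak{F})$, so the cohomological dimension at $G(\mathfrak{F})_+$ is positive. Combined with $H^1_{G(\mathfrak{F})_+}(G(\mathfrak{F})) = 0$ this produces some $\ell \geq 2$ with $H^\ell_{G(\mathfrak{F})_+}(G(\mathfrak{F})) \neq 0$, and the standard lower bound $a_\ell(G(\mathfrak{F})) + \ell \geq 0$ (valid for a nonzero finitely generated graded Noetherian ring with nonzero degree-$0$ piece $A/I_1$) gives $\text{reg}\,G(\mathfrak{F}) \geq 0$. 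This non-negativity is the only subtlety that must be navigated beyond bookkeeping from Theorem \ref{teo3.1}.
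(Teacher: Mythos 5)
Your reduction of the corollary to Theorem \ref{teo3.1} is exactly the paper's argument: both inequalities are obtained by the same index-by-index bookkeeping from parts (i)--(iv), and you correctly identify that the only non-formal input is the inequality $\text{reg}\,G(\mathfrak{F})\geq 0$, needed so that the contribution $a_1(R(\mathfrak{F}))+1=0$ in the regime of part (iv) is dominated. The gap is in how you justify that inequality. The paper simply cites \cite[Proposition 3.2]{HZ}, which gives $\text{reg}\,G(\mathfrak{F})\geq 0$ for good filtrations; your substitute derivation has two unsupported steps. First, the inference ``$G(\mathfrak{F})_+$ is non-nilpotent, so the cohomological dimension at $G(\mathfrak{F})_+$ is positive'' is not a formal implication for ideals (an ideal generated by an idempotent in a product ring is non-nilpotent yet has vanishing higher local cohomology). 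It is true for the irrelevant ideal acting on $G(\mathfrak{F})$, but it requires an argument: for instance, pass to $G(\mathfrak{F})/\Gamma_{R(\mathfrak{F})_+}(G(\mathfrak{F}))$, which is nonzero by graded Nakayama when $G(\mathfrak{F})_+$ is not nilpotent, and apply Grothendieck's grade non-vanishing; or invoke the fact used later in the paper (the remark of \cite[p.\ 2818]{T}) that $H^{s(\mathfrak{F})}_{G(\mathfrak{F})_+}(G(\mathfrak{F}))\neq 0$, together with $s(\mathfrak{F})\geq 1$ when $I_1\not\subseteq\sqrt{0}$.

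Second, and more seriously, the ``standard lower bound $a_\ell(G(\mathfrak{F}))+\ell\geq 0$'' for an index $\ell$ with nonvanishing cohomology is not a citable general fact, and it is false at the level of generality you state it: $G(\mathfrak{F})$ for a good filtration need not be standard graded, and already for the graded ring $S=k[x]$ with $x$ placed in degree $2$ one has $H^1_{S_+}(S)\neq 0$ with $a_1(S)+1=-1$, so the regularity of a finitely generated graded Noetherian ring with nonzero degree-zero piece can be negative. (Even for standard graded rings, non-negativity is a statement about the maximum over all $i$, not about an individual nonvanishing index.) The non-negativity $\text{reg}\,G(\mathfrak{F})\geq 0$ genuinely uses the good-filtration structure and is precisely the content of \cite[Proposition 3.2]{HZ} -- note that without it the corollary itself could fail in your last case, since $H^1=0$ together with $I_1\not\subseteq\sqrt{0}$ and all higher cohomology vanishing would force $\text{reg}\,R(\mathfrak{F})=0>\text{reg}\,G(\mathfrak{F})$. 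So either cite that proposition (as the paper does) or supply a genuine proof of $\text{reg}\,G(\mathfrak{F})\geq 0$ for good filtrations; as written, the case $H^1_{G(\mathfrak{F})_+}(G(\mathfrak{F}))=0$, $I_1\not\subseteq\sqrt{0}$ is not closed.
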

\begin{proof}
By the Theorem \ref{teo3.1}(i), $a_{i}(R(\mathfrak{F}))+i \leq a_{i}(G(\mathfrak{F}))+i$ for $i\neq 1$. By the Theorem \ref{teo3.1}(iii)(iv) either $$a_{1}(R(\mathfrak{F}))+1\leq a_{1}(G(\mathfrak{F}))+1 \ \text{or} \ a_{1}(R(\mathfrak{F}))+1=0 \leq \text{reg} \ G(\mathfrak{F}).$$ In both case we have $\text{reg} \ R(\mathfrak{F}) \leq \text{reg} \ G(\mathfrak{F})$.

For the other inequality, first remember that by \cite[Proposition 3.2]{HZ}, $\text{reg} \ G(\mathfrak{F})\geq 0$. Let i be maximal such that
$$
a_{i}(G(\mathfrak{F}))+i=\text{reg} \ G(\mathfrak{F}) \geq 0.
$$
It implies that $a_{i}(G(\mathfrak{F}))+i\geq a_{i+1}(G(\mathfrak{F}))+i+1$, i.e, $a_{i}(G(\mathfrak{F}))> a_{i+1}(G(\mathfrak{F}))$. If $H_{G(\mathfrak{F})_{+}}^{i}(G(\mathfrak{F}))=0$ then $a_{i}(G(\mathfrak{F}))=-\infty$. A contradiction since $\text{reg} \ G(\mathfrak{F})\geq 0$. Thus $$H_{G(\mathfrak{F})_{+}}^{i}(G(\mathfrak{F}))\neq 0.$$ If $i\neq 1$, by the Theorem \ref{teo3.1}(ii), $a_{i}(R(\mathfrak{F}))= a_{i}(G(\mathfrak{F}))$. If $i=1$, by Theorem \ref{teo3.1}(iii), $a_{1}(R(\mathfrak{F}))= a_{1}(G(\mathfrak{F}))$ since $H_{G(\mathfrak{F})_{+}}^{1}(G(\mathfrak{F}))\neq 0$. Thus $a_{i}(R(\mathfrak{F}))= a_{i}(G(\mathfrak{F}))$ for $i\geq 0$. Therefore
$$
\text{reg} \ G(\mathfrak{F})=a_{i}(R(\mathfrak{F}))+i \leq \text{reg} \ R(\mathfrak{F}).
$$
\end{proof}

\section{Regularity of the Fiber Cone for good filtrations}
In the previous section we have shown the relationship between the $a$-invariants and regularity of the associated graded ring $G(\mathfrak{F})$ and the Rees algebra $R(\mathfrak{F})$. It is natural asks when we have inequality or equality between the fiber cone $F(\mathfrak{F})$ and the Rees algebra $R(\mathfrak{F})$. In the article \cite{JN}, Jayanthan and Nanduri obtained, under some assumption, inequalities and equalities between the regularities of $F(I)$ and $R(I)$. The main aim this section is generalize the results of the second section of \cite{JN} for good filtration case, that is, we search upper bound for the fiber cone $F(\mathfrak{F})$. The proofs are essentially the same.

Throughout Let $A$ a Noetherian Local Ring of dimension $d>0$ with an infinite residue field $A/\mathfrak{m}$ and $\mathfrak{F}$ a good filtration. Consider the
filtrations $$\mathcal{F}: A\supset \mathfrak{m} \supset \mathfrak{m}I_{1} \supset \mathfrak{m} I_{2} \supset ... $$ $$\mathfrak{F}: A  \supset I_{1} \supset  I_{2} \supset I_{3} \supset ... $$ Throughout we assume that $\mathfrak{F}\leq\mathcal{F}$, i.e, $I_{n+1}\subseteq\mathfrak{m}I_{n}$. Hence $R(\mathcal{F})$ is a $R(\mathfrak{F})$-module finitely generated since $\mathfrak{F}$ is a reduction of $\mathcal{F}$. We have two exact sequences
\begin{equation}\label{exa1}
0\rightarrow R(\mathfrak{F})\rightarrow R(\mathcal{F})\rightarrow \mathfrak{m}G(\mathfrak{F})(-1)\rightarrow 0
\end{equation}
\begin{equation}\label{exa2}
0\rightarrow \mathfrak{m}G( \mathfrak{F})\rightarrow G(\mathfrak{F})\rightarrow F(\mathfrak{F})\rightarrow 0
\end{equation}
of $R(\mathfrak{F})$-modules finitely generated.

In this article we always consider $\ell:= s(\mathfrak{F})=\dim F(\mathfrak{F})>0$.

\begin{lem}\label{lema 2}
Let $\underline{x}=x_{1},...,x_{\ell}$ generators of a minimal reduction $\mathfrak{F}$. We know that $\sqrt{R(\mathfrak{F})_{+}}=\sqrt{(x_{1},...,x_{\ell})}$. Denote $\underline{x}^{k}=(x_{1}^{k},...,x_{\ell}^{k})$. We have $a_{\ell}(R(\mathcal{F}))-1\leq a_{\ell}(R(\mathfrak{F}))$.
\end{lem}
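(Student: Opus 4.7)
The strategy is to apply the local cohomology functor $H^{\bullet}_{R(\mathfrak{F})_{+}}(-)$ to the short exact sequence \eqref{exa1}. The hypothesis $\sqrt{R(\mathfrak{F})_{+}}=\sqrt{(x_{1},\ldots,x_{\ell})}$ gives $\text{ara}(R(\mathfrak{F})_{+})\leq\ell$, so by \cite[Corollary~3.3.3]{BS} we have $H^{i}_{R(\mathfrak{F})_{+}}(M)=0$ for every $i>\ell$ and every $R(\mathfrak{F})$-module $M$. Writing $H^{i}=H^{i}_{R(\mathfrak{F})_{+}}$ and using the degree shift $H^{\ell}(\mathfrak{m}G(\mathfrak{F})(-1))_{n}=H^{\ell}(\mathfrak{m}G(\mathfrak{F}))_{n-1}$, the long exact sequence attached to \eqref{exa1} terminates in degree $n$ as
\[
H^{\ell}(R(\mathfrak{F}))_{n}\rightarrow H^{\ell}(R(\mathcal{F}))_{n}\rightarrow H^{\ell}(\mathfrak{m}G(\mathfrak{F}))_{n-1}\rightarrow 0.
\]

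From this sequence, $H^{\ell}(R(\mathcal{F}))_{n}=0$ whenever both $H^{\ell}(R(\mathfrak{F}))_{n}=0$ and $H^{\ell}(\mathfrak{m}G(\mathfrak{F}))_{n-1}=0$, giving
\[
a_{\ell}(R(\mathcal{F}))\leq\max\{a_{\ell}(R(\mathfrak{F})),\,a_{\ell}(\mathfrak{m}G(\mathfrak{F}))+1\}.
\]
Hence the inequality $a_{\ell}(R(\mathcal{F}))-1\leq a_{\ell}(R(\mathfrak{F}))$ reduces to proving the auxiliary estimate $a_{\ell}(\mathfrak{m}G(\mathfrak{F}))\leq a_{\ell}(R(\mathfrak{F}))$. (That this really is an equivalence follows from the same exact sequence, since for $n=a_{\ell}(\mathfrak{m}G(\mathfrak{F}))+1$ one has $H^{\ell}(R(\mathfrak{F}))_{n}=0$ whenever $n>a_{\ell}(R(\mathfrak{F}))$, and the surjection above then forces $H^{\ell}(R(\mathcal{F}))_{n}\neq 0$.)

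For the auxiliary estimate I would apply $H^{\bullet}$ to \eqref{exa2} and use Theorem \ref{teo3.1}(ii), which applies with $i=\ell$ because the images $x_{i}^{*}\in G(\mathfrak{F})_{1}$ satisfy $\sqrt{(x_{1}^{*},\ldots,x_{\ell}^{*})}=\sqrt{G(\mathfrak{F})_{+}}$ (since $\underline{x}$ is a reduction of $\mathfrak{F}$), forcing $a_{\ell+1}(G(\mathfrak{F}))=-\infty$; this identifies $a_{\ell}(G(\mathfrak{F}))=a_{\ell}(R(\mathfrak{F}))$. The hard part will be controlling the contribution of $H^{\ell-1}(F(\mathfrak{F}))$ appearing in the long exact sequence of \eqref{exa2}, which is needed to pass from $a_{\ell}(\mathfrak{m}G(\mathfrak{F}))\leq\max\{a_{\ell-1}(F(\mathfrak{F})),a_{\ell}(G(\mathfrak{F}))\}$ to $a_{\ell}(\mathfrak{m}G(\mathfrak{F}))\leq a_{\ell}(R(\mathfrak{F}))$. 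I expect this is precisely where the notation $\underline{x}^{k}=(x_{1}^{k},\ldots,x_{\ell}^{k})$ intervenes: using the \v{C}ech description $H^{\ell}(M)_{n}=\varinjlim_{k}M_{n+k\ell}/\sum_{i}x_{i}^{k}M_{n+k\ell-k}$ together with the analytically independent property of $\underline{x}$ recorded in Section~2 (which gives $J^{h}\cap I_{h}\mathfrak{m}=J^{h}\mathfrak{m}$), one should be able to lift an annihilating relation of a numerator $m\in\mathfrak{m}I_{\bullet-1}$ from the level of $I_{\bullet}$ to the level of $\mathfrak{m}I_{\bullet}$, which is exactly the algebraic distinction between the filtrations $\mathfrak{F}$ and $\mathcal{F}$ that the lemma is measuring.
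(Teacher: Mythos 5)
There is a genuine gap. Your reduction via the long exact sequence of \eqref{exa1} is correct, but, as your own parenthetical remark shows, the auxiliary estimate $a_{\ell}(\mathfrak{m}G(\mathfrak{F}))\leq a_{\ell}(R(\mathfrak{F}))$ is \emph{equivalent} to the inequality being proved, so deferring it defers the entire content of the lemma; nothing has actually been established. The route you sketch for that estimate is moreover problematic: the long exact sequence of \eqref{exa2} only yields $a_{\ell}(\mathfrak{m}G(\mathfrak{F}))\leq\max\{a_{\ell-1}(F(\mathfrak{F})),a_{\ell}(G(\mathfrak{F}))\}$, and there is no independent control of $H^{\ell-1}(F(\mathfrak{F}))$ at this point --- bounding the cohomology (regularity) of $F(\mathfrak{F})$ is precisely what this lemma is later used for (Theorem \ref{teo4.2}, Proposition \ref{prop2}), so this direction risks circularity. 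A smaller point: Theorem \ref{teo3.1}(ii) excludes $i=1$, so the identification $a_{\ell}(G(\mathfrak{F}))=a_{\ell}(R(\mathfrak{F}))$ you invoke needs the hypotheses of part (iii) when $\ell=1$.

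The paper's proof does not pass through \eqref{exa1} or \eqref{exa2} at all; the missing ingredient is the one you only gesture at in your last sentence. Since $x_{1},\dots,x_{\ell}$ generate $R(\mathfrak{F})_{+}$ up to radical (and $\mathfrak{F}$ is a reduction of $\mathcal{F}$), the top local cohomology modules admit the explicit direct-limit description of \cite[Corollary 35.21]{HIO}:
\[
[H_{R(\mathfrak{F})_{+}}^{\ell}(R(\mathfrak{F}))]_{n}\cong \varinjlim_{k}\, \frac{I_{\ell k+n}}{(\underline{x}^{k})I_{(\ell-1)k+n}},
\qquad
[H_{R(\mathfrak{F})_{+}}^{\ell}(R(\mathcal{F}))]_{n}\cong \varinjlim_{k}\, \frac{\mathfrak{m}I_{\ell k+n-1}}{(\underline{x}^{k})\mathfrak{m}I_{(\ell-1)k+n-1}},
\]
and the lemma follows by comparing these two limits degreewise: the graded piece of $H^{\ell}(R(\mathcal{F}))$ in degree $n$ is built, up to the shift by one, from the same data as that of $H^{\ell}(R(\mathfrak{F}))$ in degree $n-1$ with the filtration multiplied by $\mathfrak{m}$, whence $a_{\ell}(R(\mathcal{F}))-1\leq a_{\ell}(R(\mathfrak{F}))$. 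To complete your argument you would in effect have to carry out this limit comparison anyway (in which case the detour through \eqref{exa1} and the unproved bound on $a_{\ell}(\mathfrak{m}G(\mathfrak{F}))$ are superfluous), or else produce an independent bound for $a_{\ell-1}(F(\mathfrak{F}))$, which is not available here.
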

\begin{proof}
By \cite[Corollary 35.21]{HIO},
$$
[H_{R(\mathfrak{F})_{+}}^{\ell}(R(\mathfrak{F}))]_{n}\cong \lim_{\overrightarrow{k}} \frac{I_{\ell k+n}}{(\underline{x_{k}})I_{(\ell-1)k+n}}
$$
                                                      and
$$
\ \ \ \ \ \ [H_{R(\mathfrak{F})_{+}}^{\ell}(R(\mathcal{F}))]_{n}\cong \lim_{\overrightarrow{k}} \frac{\mathfrak{m}I_{\ell k+n-1}}{(\underline{x_{k}})\mathfrak{m}I_{(\ell-1)k+n-1}} .\vspace{0.1cm}
$$
Then $a_{\ell}(R(\mathcal{F}))-1\leq a_{\ell}(R(\mathfrak{F}))$.
\end{proof}

In \cite{HZ}, Hoa and Zarzuela showed for the good filtration case that under some assumptions the regularity of the associated graded ring is equal to reduction number. In the next lemma we see when the regularity of the fiber cone is equal to reduction number.

\begin{lem}\label{lem}
Let $(A,\mathfrak{m})$ a Noetherian local ring and $\mathfrak{F}$ a good filtration such that $s(\mathfrak{F})=1$. If $J$ is a minimal reduction of $\mathfrak{F}$ and $\emph{grade} \ I_{1}=1$ then $$\emph{reg} \ F(\mathfrak{F})=r_{J}(\mathfrak{F}).$$
\end{lem}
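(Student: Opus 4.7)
\medskip
\noindent\textbf{Proof plan.}
Since $s(\mathfrak{F})=1$ and $A/\mathfrak{m}$ is infinite, every minimal reduction $J$ of $\mathfrak{F}$ is principal; write $J=(x)$ with $x\in I_{1}$. Using the remarks from Section~2, I can choose $x$ to be \emph{superficial} for $\mathfrak{F}$, and because $\mathrm{grade}\,I_{1}=1$ such an $x$ is automatically a non-zero-divisor on $A$. Denote by $x^{o}\in F(\mathfrak{F})_{1}$ its image; since $J$ is a reduction and $\dim F(\mathfrak{F})=s(\mathfrak{F})=1$, $x^{o}$ is a linear homogeneous system of parameters of the $1$-dimensional graded ring $F(\mathfrak{F})$.

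\medskip
\noindent\textbf{Step 1 (Reduction number as an $a$-invariant).} Apply Nakayama's lemma in each degree $n\geq 1$: $I_{n}=xI_{n-1}$ if and only if $I_{n}=xI_{n-1}+\mathfrak{m}I_{n}$, i.e.\ if and only if the graded piece $[F(\mathfrak{F})/x^{o}F(\mathfrak{F})]_{n}=I_{n}/(xI_{n-1}+\mathfrak{m}I_{n})$ vanishes. Hence
$$r_{J}(\mathfrak{F})\;=\;\sup\bigl\{n:[F(\mathfrak{F})/x^{o}F(\mathfrak{F})]_{n}\neq 0\bigr\}\;=\;a\bigl(F(\mathfrak{F})/x^{o}F(\mathfrak{F})\bigr).$$

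\medskip
\noindent\textbf{Step 2 (Filter-regularity of $x^{o}$).} The key technical step is to show that $(0:_{F(\mathfrak{F})}x^{o})_{n}=0$ for $n\gg 0$, i.e.\ that any $u\in I_{n}$ with $xu\in\mathfrak{m}I_{n+1}$ already lies in $\mathfrak{m}I_{n}$ for $n$ large. The superficial identity $(I_{n+1}:x)\cap I_{c}=I_{n}$ only tells us that $xu\in I_{n+1}\Rightarrow u\in I_{n}$, so the work is to promote this to the congruence modulo $\mathfrak{m}$. Here the single element $x$ is analytically independent in $\mathfrak{F}$, so $(x)^{h}\cap I_{h}\mathfrak{m}=(x)^{h}\mathfrak{m}$ for all $h$; writing $xu=\sum_{i}m_{i}y_{i}$ with $m_{i}\in\mathfrak{m}$, $y_{i}\in I_{n+1}$, and iterating the superficial relation lets one extract an $\mathfrak{m}$-factor from $u$ once $n\geq c$.

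\medskip
\noindent\textbf{Step 3 (Regularity formula).} With $x^{o}$ filter-regular on the $1$-dimensional $F(\mathfrak{F})$, both $(0:_{F(\mathfrak{F})}x^{o})$ and $F(\mathfrak{F})/x^{o}F(\mathfrak{F})$ are graded Artinian. From the four-term exact sequence
$$0\to(0:_{F(\mathfrak{F})}x^{o})(-1)\to F(\mathfrak{F})(-1)\xrightarrow{\,x^{o}\,}F(\mathfrak{F})\to F(\mathfrak{F})/x^{o}F(\mathfrak{F})\to 0$$
and the associated long exact sequence in $H^{\bullet}_{F(\mathfrak{F})_{+}}$, one obtains $H^{i}(F(\mathfrak{F}))=0$ for $i\geq 2$ together with the inequalities $a_{1}(F(\mathfrak{F}))+1\leq a(F(\mathfrak{F})/x^{o}F(\mathfrak{F}))$ and $a_{0}(F(\mathfrak{F}))\leq a(0:_{F(\mathfrak{F})}x^{o})$. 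A direct Hilbert-function comparison, using that $F(\mathfrak{F})_{0}=A/\mathfrak{m}$ and that multiplication by $x^{o}$ is eventually an isomorphism on graded pieces, yields $a(0:_{F(\mathfrak{F})}x^{o})\leq a(F(\mathfrak{F})/x^{o}F(\mathfrak{F}))$, so $\mathrm{reg}\,F(\mathfrak{F})=a(F(\mathfrak{F})/x^{o}F(\mathfrak{F}))$. Combining with Step 1 gives the desired equality $\mathrm{reg}\,F(\mathfrak{F})=r_{J}(\mathfrak{F})$.

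\medskip
\noindent\textbf{Main obstacle.} The delicate point is Step~2: superficiality is a statement modulo $I_{n+1}$, while filter-regularity of $x^{o}$ on the fiber cone is a statement modulo $\mathfrak{m}I_{n+1}$. Bridging this gap is precisely where the hypotheses $s(\mathfrak{F})=1$ (giving analytic independence of the single generator $x$) and $\mathrm{grade}\,I_{1}=1$ (ensuring $x$ is regular on $A$) are needed; the other steps are formal consequences of this filter-regularity together with the one-dimensionality of $F(\mathfrak{F})$.
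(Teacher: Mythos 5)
Your Step 1 is correct, and your general route (a linear parameter $x^{o}$ on the one-dimensional ring $F(\mathfrak{F})$, regularity via $a$-invariants) differs from the paper, which instead uses that $F(\mathfrak{F})$ is a finite module over $F(J)\cong k[x]$, decomposes it as $\bigoplus_{i}F(J)(-b_{i})\oplus\bigoplus_{j}\bigl(F(J)/a^{c_{j}}F(J)\bigr)(-d_{j})$, reads off $r_{J}(\mathfrak{F})=\max\{b_{e},d_{f}\}$ and $\mathrm{reg}\,F(\mathfrak{F})=\max\{b_{e},c_{j}+d_{j}-1\}$, and then uses $\mathrm{grade}\,I_{1}=1$ through a Hilbert series comparison. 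However, as written your argument has a genuine gap, and you have located the difficulty in the wrong place. Step 2 needs none of the superficiality/analytic-independence machinery: since $I_{n}=xI_{n-1}$ for $n>r_{J}(\mathfrak{F})$, the quotient $F(\mathfrak{F})/x^{o}F(\mathfrak{F})$ vanishes in degrees $>r_{J}(\mathfrak{F})$, so $x^{o}$ is a homogeneous parameter of the one-dimensional graded ring $F(\mathfrak{F})$ and is automatically filter-regular. The real content of the lemma is the inequality you assert in Step 3, namely $a(0:_{F(\mathfrak{F})}x^{o})\leq a(F(\mathfrak{F})/x^{o}F(\mathfrak{F}))$, and your justification (``$F(\mathfrak{F})_{0}=A/\mathfrak{m}$ and multiplication by $x^{o}$ is eventually an isomorphism on graded pieces'') does not imply it. For the standard graded algebra $F=k[x,y]/(y^{2},x^{5}y)$, which is one-dimensional with $F_{0}=k$ and linear parameter $x$, multiplication by $x$ is an isomorphism in all large degrees, yet $a(0:_{F}x)=5>1=a(F/xF)$ and $\mathrm{reg}\,F=5$. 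So ``eventually'' is not enough: you need multiplication $F(\mathfrak{F})_{n}\to F(\mathfrak{F})_{n+1}$ to be an isomorphism for \emph{every} $n\geq r_{J}(\mathfrak{F})$, and proving this is exactly where $\mathrm{grade}\,I_{1}=1$ enters (not in Step 2): since $x$ is regular on $A$ and $I_{n}=x^{\,n-r}I_{r}$ for $n\geq r:=r_{J}(\mathfrak{F})$, multiplication by $x^{\,n-r}$ gives a bijection $I_{r}/\mathfrak{m}I_{r}\to I_{n}/\mathfrak{m}I_{n}$. This is the heart of the paper's proof, which then concludes by comparing the two expressions for the Hilbert series to get $c_{j}+d_{j}\leq r$, hence $\mathrm{reg}\,F(\mathfrak{F})=b_{e}=r$.

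Two smaller points. First, $J$ is given in the statement, so you cannot simply ``choose'' its generator to be superficial; superficiality does follow once $x$ is known to be regular and $(x)$ a reduction, but you neither prove this nor actually need it. Second, in Step 3 your long exact sequence only yields $\mathrm{reg}\,F(\mathfrak{F})\leq\max\{a(0:_{F(\mathfrak{F})}x^{o}),\,a(F(\mathfrak{F})/x^{o}F(\mathfrak{F}))\}$; the reverse inequality $\mathrm{reg}\,F(\mathfrak{F})\geq a(F(\mathfrak{F})/x^{o}F(\mathfrak{F}))$, which you also need for the equality with $r_{J}(\mathfrak{F})$, is true (e.g.\ from $\mathrm{reg}(F(\mathfrak{F})/x^{o}F(\mathfrak{F}))\leq\mathrm{reg}\,F(\mathfrak{F})$ for a filter-regular linear form) but must be stated. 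If you insert the degreewise isomorphism for all $n\geq r_{J}(\mathfrak{F})$ as above, your scheme can be completed; as it stands, the decisive step is missing.
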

\begin{proof}
Since $J$ is a minimal reduction of $\mathfrak{F}$, it follow by \cite{R2}, that $s(\mathfrak{F})=\mu(J)$. But by the hypothesis $s(\mathfrak{F})=1$, then $J=(a)$. By a adaptation of \cite[Proposition 18.2.4]{BS}, $a$ is independent analytically in $\mathfrak{F}$. By \cite[18.2.3]{BS}, $F(J)$ is isomorphic to $k[x]$, where $k=A/\mathfrak{m}$ since $a$ is independent analytically in $I_{1}$. We also have $J^{i}\cap I_{i}\mathfrak{m}=J^{i}\mathfrak{m}$. Then $$F(J)\hookrightarrow F(\mathfrak{F}).$$ Now by \cite[Example 9.3.1]{V} or \cite[Theorem 2.6.1]{GP} $$F(\mathfrak{F})\simeq \bigoplus_{i=1}^{e}F(J)(-b_{i})\bigoplus_{j=1}^{f}(F(J)/a^{c_{j}}F(J))(-d_{j}).$$ Let assume $b_{1}\leq ... \leq b_{e}$ and $d_{1}\leq ... \leq d_{f}$. Hence $$H_{F(\mathfrak{F})}(x)=\frac{x^{b_{1}}+...+x^{b_{e}}+(1-x^{c_{1}})x^{d_{1}}+...+(1-x^{c_{f}})x^{d_{f}}}{1-x}.$$ Observe that
\begin{equation}
\begin{array}{c}
  r_{J}(\mathfrak{F})=r_{J}(F(\mathfrak{F}))=\max \{b_{e}, d_{f} \} \vspace{0.2cm}\\
  \text{reg} \ F(\mathfrak{F})= \max \{b_{e}, c_{j}+d_{j}-1 \}
\end{array}
\end{equation}
By the hypothesis $I_{1}$ contain a regular element. Note that since $J$ is a reduction of $I_{1}$, $a$ is also a regular element. Let $r:=r_{J}(\mathfrak{F})$. If $n\geq r$ then $I_{n}=a^{n-r}I_{r}$. Hence $$\frac{I_{n}}{\mathfrak{m}I_{n}}=\frac{a^{n-r}I_{r}}{a^{n-r}\mathfrak{m}I_{r}}.$$ Then we have the map $$\frac{I_{r}}{\mathfrak{m}I_{r}} \longrightarrow \frac{I_{n}}{\mathfrak{m}I_{n}},$$ multiplication by $a^{n-r}$. It is easy to show that the map is bijective. Thus $\mu(I_{n})=\mu(I_{r})$ for $n\geq r$. Then
$$H_{F(\mathfrak{F})}(x)  =\sum_{n\geq 0}\mu(I_{n})x^{n}=\frac{1+(\mu(I_{1})-1)x+...+(\mu(I_{r})-\mu(I_{r-1}))x^{r}}{1-x}. $$ Comparing both the expression of the Hilbert series, it follow that $c_{j}+d_{j}\leq r$ and then $d_{f}\leq r-1$. In particular $r:=r_{J}(\mathfrak{F})=b_{e}$. Therefore $\text{reg} \ F(\mathfrak{F})=r_{J}(\mathfrak{F})$.

\end{proof}

In the article \cite{JN}, Jayanthan and Nanduri showed that if the analytic spread is one, the regularity of the fiber cone $F(I)$ is bounded above by the regularity of the associated graded ring $G(I)$. The next theorem show that the result is still true for any good filtration. We denote $H_{R(\mathfrak{F})_{+}}^{i}(M)$ by $H^{i}(M)$.

\begin{thm}\label{teo4.2}
Let $(A,\mathfrak{m})$ a Noetherian local ring and $\mathfrak{F}$ a good-filtration such that $\ell:= s(\mathfrak{F})=1$. Then $ \emph{reg}\ F(\mathfrak{F})\leq \emph{reg}\ G(\mathfrak{F})$. Furthermore, if $\emph{grade} \ I_{1}=1$ we have $ \emph{reg}\ F(\mathfrak{F}) = \emph{reg}\ G(\mathfrak{F})= r(\mathfrak{F})$.
\end{thm}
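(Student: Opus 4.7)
The plan is to adapt the argument of \cite{JN} for the $I$-adic case by running the two short exact sequences \eqref{exa1} and \eqref{exa2} through the local cohomology functor $H^i := H^i_{R(\mathfrak{F})_+}(-)$, exploiting that $\ell = s(\mathfrak{F}) = \dim F(\mathfrak{F}) = 1$ to make the cohomology of $F(\mathfrak{F})$ collapse to low degrees.

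For the first inequality, Grothendieck vanishing gives $H^i(F(\mathfrak{F})) = 0$ for $i \geq 2$, hence $\text{reg}\, F(\mathfrak{F}) = \max\{a_0(F(\mathfrak{F})),\, a_1(F(\mathfrak{F})) + 1\}$. The long exact sequence attached to \eqref{exa2} produces, for $i = 0, 1$, the bound $a_i(F(\mathfrak{F})) \leq \max\{a_i(G(\mathfrak{F})),\, a_{i+1}(\mathfrak{m}G(\mathfrak{F}))\}$, so the task reduces to controlling $a_j(\mathfrak{m}G(\mathfrak{F}))$ for $j = 1, 2$. This is where \eqref{exa1} enters: feeding it into $H^i$ and tracking the $(-1)$-shift, together with Lemma \ref{lema 2} and Corollary \ref{cor} --- the latter applied both to $\mathfrak{F}$ and to the auxiliary good filtration $\mathcal{F}$, which is $I_1$-good because of the hypothesis $\mathfrak{F} \leq \mathcal{F}$ --- replaces each appearance of an $R$-regularity by the corresponding $G$-regularity and bounds the whole chain by $\text{reg}\, G(\mathfrak{F})$. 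Substituting back into the first-step inequality yields $\text{reg}\, F(\mathfrak{F}) \leq \text{reg}\, G(\mathfrak{F})$.

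For the second statement, assume $\text{grade}\, I_1 = 1$ and pick a superficial element $x \in I_1$ generating a minimal reduction $J = (x)$ (available because $A/\mathfrak{m}$ is infinite and $s(\mathfrak{F}) = 1$). Lemma \ref{lem} immediately gives $\text{reg}\, F(\mathfrak{F}) = r_J(\mathfrak{F})$. Grade one forces $x$ to be $A$-regular and $x^*$ to be filter-regular on $G(\mathfrak{F})$ (by the characterization recalled just before \cite[Remark 2.10]{HZ}); combined with $\ell = 1$ and a Hoa--Zarzuela-type estimate from \cite{HZ} (for instance \cite[Proposition 3.2]{HZ}), this yields $\text{reg}\, G(\mathfrak{F}) \leq r_J(\mathfrak{F})$. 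Sandwiched with the first inequality, this forces $r_J(\mathfrak{F}) = \text{reg}\, F(\mathfrak{F}) \leq \text{reg}\, G(\mathfrak{F}) \leq r_J(\mathfrak{F})$, so all three quantities agree. Since the argument was independent of the choice of minimal reduction $J$, the common value must equal $r(\mathfrak{F}) = \min_{J'} r_{J'}(\mathfrak{F})$.

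The main technical obstacle is the middle step: controlling $a_{j+1}(\mathfrak{m}G(\mathfrak{F}))$ via \eqref{exa1} demands careful tracking of the $(-1)$-shift and a verification that $\text{reg}\, R(\mathcal{F})$, computed with respect to $R(\mathfrak{F})_+$, is bounded above by $\text{reg}\, G(\mathfrak{F})$. This last point uses the module-finiteness of $R(\mathcal{F})$ over $R(\mathfrak{F})$ to compare the two local cohomology supports, combined with Corollary \ref{cor} applied to the good filtration $\mathcal{F}$.
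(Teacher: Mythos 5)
Your skeleton is the same as the paper's (run $H^{i}:=H^{i}_{R(\mathfrak{F})_{+}}(-)$ over the sequences \eqref{exa1} and \eqref{exa2}, use Lemma \ref{lema 2}, and settle the grade-one case with Lemma \ref{lem}), but the pivotal middle step, which you yourself single out, has a genuine gap. You propose to prove $\mathrm{reg}\,R(\mathcal{F})\leq \mathrm{reg}\,G(\mathfrak{F})$ by ``applying Corollary \ref{cor} to the good filtration $\mathcal{F}$''. This does not work. First, $\mathcal{F}$ is not a good filtration in the sense required by Corollary \ref{cor}: it need not even be multiplicative, since $\mathcal{F}_{1}\mathcal{F}_{1}=\mathfrak{m}^{2}$ is in general not contained in $\mathcal{F}_{2}=\mathfrak{m}I_{1}$ (the standing hypothesis $\mathfrak{F}\leq\mathcal{F}$ goes the wrong way for this). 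So $R(\mathcal{F})$ is only a finitely generated $R(\mathfrak{F})$-module, not the Rees ring of a filtration, $G(\mathcal{F})$ is not a graded ring, and Corollary \ref{cor} cannot be invoked for it. Second, even if it could, it would only give $\mathrm{reg}\,R(\mathcal{F})=\mathrm{reg}\,G(\mathcal{F})$, and you offer no comparison of $\mathrm{reg}\,G(\mathcal{F})$ with $\mathrm{reg}\,G(\mathfrak{F})$; the inequality $\mathrm{reg}\,R(\mathcal{F})\leq\mathrm{reg}\,R(\mathfrak{F})$ you would need is precisely the nontrivial hypothesis that the paper isolates in Proposition \ref{prop4.6}, so it cannot be assumed here. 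A smaller but real omission: your vanishing claim ($H^{i}(F(\mathfrak{F}))=0$ for $i\geq 2$ by Grothendieck, since $\dim F(\mathfrak{F})=1$) does not cover $a_{2}(\mathfrak{m}G(\mathfrak{F}))$, $a_{2}(R(\mathfrak{F}))$, etc., which your own reduction requires; the paper gets this for \emph{all} modules at once from $\mathrm{ara}(R(\mathfrak{F})_{+})\leq 1$, because $\ell=1$ makes a minimal reduction principal.

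The paper's actual route avoids your obstacle entirely: with $H^{i}(M)=0$ for $i\geq 2$, the sequence \eqref{exa1} gives a surjection $H^{1}(R(\mathcal{F}))\rightarrow H^{1}(\mathfrak{m}G(\mathfrak{F}))(-1)$, hence $a_{1}(\mathfrak{m}G(\mathfrak{F}))+1\leq a_{1}(R(\mathcal{F}))$, and Lemma \ref{lema 2} alone (no regularity of $R(\mathcal{F})$ needed) yields $a_{1}(\mathfrak{m}G(\mathfrak{F}))\leq a_{1}(R(\mathfrak{F}))$; then $a_{1}(R(\mathfrak{F}))\leq a_{1}(G(\mathfrak{F}))$ follows from Theorem \ref{teo3.1}(iii) together with Trung's remark \cite{T} that $H^{1}_{G(\mathfrak{F})_{+}}(G(\mathfrak{F}))\neq 0$ when $s(\mathfrak{F})=1$ --- a step absent from your plan (alternatively, the weaker bound $a_{1}(R(\mathfrak{F}))+1\leq\mathrm{reg}\,R(\mathfrak{F})=\mathrm{reg}\,G(\mathfrak{F})$ from Corollary \ref{cor} applied to $\mathfrak{F}$ itself would suffice). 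Combined with $a_{0}(\mathfrak{m}G(\mathfrak{F}))\leq a_{0}(G(\mathfrak{F}))$ from \eqref{exa2}, this gives $\mathrm{reg}\,\mathfrak{m}G(\mathfrak{F})\leq\mathrm{reg}\,G(\mathfrak{F})$ and then the claim. Your grade-one part is essentially the paper's: Lemma \ref{lem} gives $\mathrm{reg}\,F(\mathfrak{F})=r_{J}(\mathfrak{F})$, and the paper concludes with \cite[Proposition 3.6]{HZ} ($\mathrm{reg}\,G(\mathfrak{F})=r_{J}(\mathfrak{F})$); your sandwich via $\mathrm{reg}\,G(\mathfrak{F})\leq r_{J}(\mathfrak{F})$ is fine in spirit, but the reference you gesture at, \cite[Proposition 3.2]{HZ}, is the statement $\mathrm{reg}\,G(\mathfrak{F})\geq 0$, not the bound you need.
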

\begin{proof}
Since $\ell=1$, $\sqrt{R(\mathfrak{F})_{+}}=\sqrt{(x_{1})}$. As $H_{R(\mathfrak{F})_{+}}^{i}(M)=0$ para $i>\text{ara}(R(\mathfrak{F})_{+})$ and $\text{ara}(R(\mathfrak{F})_{+})\leq 1$, it follows that $H^{i}(M)=0$ for $i>1$. From exact sequence \eqref{exa2}, we have the long exact sequence

\begin{equation}
\begin{array}{ccccccccc}
  0 & \rightarrow & H^{0}(\mathfrak{m}G(\mathfrak{F}))  & \rightarrow & H^{0}(G(\mathfrak{F})) & \rightarrow & H^{0}(F(\mathfrak{F})) &  & \vspace{0.2cm} \\
   & \rightarrow & H^{1}(\mathfrak{m}G(\mathfrak{F})) & \rightarrow & H^{1}(G(\mathfrak{F})) & \rightarrow & H^{1}(F(\mathfrak{F})) & \rightarrow & 0.
\end{array}
\end{equation}

Hence $a_{0}(\mathfrak{m}G(\mathfrak{F}))\leq a_{0}(G(\mathfrak{F}))$ e $a_{1}(F(\mathfrak{F})) \leq a_{1}(G(\mathfrak{F}))$. From exact sequence \eqref{exa1}, we have the long exact sequence
\begin{equation}
\begin{array}{ccccccccc}
 0 & \rightarrow & H^{0}(R(\mathfrak{F})) & \rightarrow & H^{0}(R(\mathcal{F})) & \rightarrow & H^{0}(\mathfrak{m}G(\mathfrak{F}))(-1) & & \vspace{0.2cm}\\
   & \rightarrow & H^{1}(R(\mathfrak{F})) & \rightarrow & H^{1}(R(\mathcal{F})) & \rightarrow & H^{1}(\mathfrak{m}G(\mathfrak{F}))(-1) & \rightarrow & 0.
\end{array}
\end{equation}

Hence $a_{1}(\mathfrak{m}G(\mathfrak{F})(-1)) = a_{1}(\mathfrak{m}G(\mathfrak{F}))+1 \leq a_{1}(R(\mathcal{F})) $ and by the Lemma \ref{lema 2} we have $$a_{1}(\mathfrak{m}G(\mathfrak{F})) \leq a_{1}(R(\mathfrak{F})).$$ By hypothesis $\ell=1$ and by a remark of \cite[p. 2818]{T}, we have $H^{1}(G(\mathfrak{F}))\neq 0$. Hence, by the Theorem \ref{teo3.1}, we have $a_{1}(R(\mathfrak{F})) \leq a_{1}(G(\mathfrak{F}))$. Then $a_{1}(\mathfrak{m}G(\mathfrak{F})) \leq a_{1}(G(\mathfrak{F}))$. Therefore $$\text{reg} \ \mathfrak{m}G(\mathfrak{F}) \leq \text{reg} \ G(\mathfrak{F}).$$ By using the exact sequence \eqref{exa2} and \cite[Exercise 15.2.15]{BS} we have $$\text{reg} \ F(\mathfrak{F})\leq \text{max}\{\text{reg} \ G(\mathfrak{F}), \text{reg}  \ \mathfrak{m}G(\mathfrak{F})-1 \}= \text{reg} \ G(\mathfrak{F}).$$

Now let suppose that $\text{grade} \ I_{1}=1$. By the Lemma \ref{lem}, we have reg $ F(\mathfrak{F})= r_{J}(\mathfrak{F})$, for any minimal reduction $J$ of $\mathfrak{F}$. Furthermore by \cite[Proposition 3.6]{HZ}, reg $G(\mathfrak{F})=r_{J}(\mathfrak{F})$. Therefore reg $F(\mathfrak{F})=$reg $ G(\mathfrak{F})=r(\mathfrak{F}).$

\end{proof}
\begin{cor}
Let $(A,\mathfrak{m})$ be a Noetherian local ring and $\mathfrak{F}$ a good filtration such that $s(\mathfrak{F}):= \dim F(\mathfrak{F})=1$. If $(A,\mathfrak{m})$ is Cohen-Macaulay and $\mathfrak{F}$ is equimultiple we have $ \emph{reg}\ F(\mathfrak{F}) = \emph{reg}\ G(\mathfrak{F})= r(\mathfrak{F})$.
\end{cor}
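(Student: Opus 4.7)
The plan is to reduce this corollary directly to Theorem \ref{teo4.2}, whose second conclusion already gives $\text{reg}\ F(\mathfrak{F}) = \text{reg}\ G(\mathfrak{F}) = r(\mathfrak{F})$ under the two hypotheses $s(\mathfrak{F})=1$ and $\text{grade}\ I_1 = 1$. The first hypothesis is assumed outright, so the entire task is to derive $\text{grade}\ I_1 = 1$ from the Cohen-Macaulay and equimultiple assumptions.

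First I would invoke the identity $s(\mathfrak{F}) = s(I_1)$ recorded in the preliminaries (via \cite[Lemma 2.8]{HZ}), so the hypothesis $s(\mathfrak{F}) = 1$ is the same as $s(I_1) = 1$. Next, since $\mathfrak{F}$ is by definition equimultiple, $s(I_1) = \text{ht}\ I_1$, hence $\text{ht}\ I_1 = 1$. Finally, because $A$ is Cohen-Macaulay, grade and height agree for every proper ideal, so $\text{grade}\ I_1 = \text{ht}\ I_1 = 1$. (One should also note that $I_1 \neq A$ is implicit from the filtration definition, which is what makes the above meaningful.)

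With $s(\mathfrak{F}) = 1$ and $\text{grade}\ I_1 = 1$ in hand, the conclusion is immediate from the second assertion of Theorem \ref{teo4.2}. There is no real obstacle here; the only thing to be careful about is citing the right identity relating $s(\mathfrak{F})$ to $s(I_1)$ and making explicit that Cohen-Macaulayness is precisely what upgrades the height bound coming from equimultiplicity to a grade bound, which is the form needed by Theorem \ref{teo4.2} (and ultimately by Lemma \ref{lem}, where the existence of a regular element in $I_1$ was essential to force the principal reduction $J = (a)$ to act injectively by multiplication).
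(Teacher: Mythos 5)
Your proposal is correct and follows essentially the same route as the paper: the paper also chains $s(\mathfrak{F})=s(I_{1})=\operatorname{ht} I_{1}=\operatorname{grade} I_{1}$ (citing \cite[Lemma 2.8]{HZ} and a grade--height identity for Cohen--Macaulay rings) and then applies Theorem \ref{teo4.2}. No gaps.
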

\begin{proof}
Since $A$ is Cohen-Macaulay and $\mathfrak{F}$ is equimultiple, by \cite[Lemma 2.8]{HZ} and by \cite[Corollary 7.7.10]{GP}, we have $s(\mathfrak{F})=s(I_{1})=h(I_{1})= \text{grade} \ I_{1}$. The result follows by the Theorem \ref{teo4.2}.

\end{proof}

\begin{lem}\label{lema 3}
Let $(A,\mathfrak{m})$ be a local ring and $a\in I_{1}$. If $a^{*}$ is a regular element of $G(\mathfrak{F})$, then $$\frac{F(\mathfrak{F})}{a^{o}F(\mathfrak{F})}\cong F(\mathfrak{F}/(a)).$$
\end{lem}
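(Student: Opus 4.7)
The plan is to construct an explicit degreewise isomorphism and then package it as a graded ring isomorphism. Write $J_n := (I_n + (a))/(a)$ for the filtration $\mathfrak{F}/(a)$ on $A/(a)$, with $\bar{\mathfrak{m}} = \mathfrak{m}/(a)$. A direct computation gives $\bar{\mathfrak{m}} J_n = (\mathfrak{m}I_n + (a))/(a)$, so
\[
F(\mathfrak{F}/(a))_n \;=\; \frac{I_n + (a)}{\mathfrak{m}I_n + (a)}.
\]
On the other side, multiplication by $a^o$ sends $x + \mathfrak{m}I_{n-1} \in F(\mathfrak{F})_{n-1}$ to $ax + \mathfrak{m}I_n$, so
\[
\bigl(F(\mathfrak{F})/a^o F(\mathfrak{F})\bigr)_n \;=\; \frac{I_n}{aI_{n-1} + \mathfrak{m}I_n}.
\]

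First I would verify that the inclusion $I_n \hookrightarrow I_n + (a)$ followed by the quotient modulo $\mathfrak{m}I_n + (a)$ induces a well-defined surjective graded map
\[
\varphi \;:\; F(\mathfrak{F})/a^o F(\mathfrak{F}) \;\longrightarrow\; F(\mathfrak{F}/(a)),
\]
and that $\varphi$ respects the multiplication (this is automatic since on representatives it is induced by the inclusion $A \hookrightarrow A$ followed by the quotient, which is a ring map). Surjectivity in degree $n$ is immediate because every class in $(I_n + (a))/(\mathfrak{m}I_n + (a))$ has a representative lying already in $I_n$.

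The only nontrivial point, and the main obstacle, is injectivity. Suppose $x \in I_n$ can be written as $x = m + ay$ with $m \in \mathfrak{m}I_n$ and $y \in A$; I must show $x \in aI_{n-1} + \mathfrak{m}I_n$, which amounts to showing $y \in I_{n-1}$. Here I would invoke regularity of $a^{\ast}$ in $G(\mathfrak{F})$ to prove the standard colon formula $(I_k : a) = I_{k-1}$ for every $k \geq 1$. The argument: $I_{k-1} \subseteq (I_k : a)$ is trivial; conversely, if $ay \in I_k$ and $y \notin I_{k-1}$, then since $\bigcap I_j = 0$ (the filtration is Noetherian) there is a maximal $j < k-1$ with $y \in I_j$, so $\bar y \in I_j/I_{j+1}$ is nonzero; but $ay \in I_k \subseteq I_{j+2}$, hence $a^{\ast}\bar y = 0$ in $G(\mathfrak{F})$, contradicting regularity of $a^{\ast}$.

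Applied with $k = n$, this gives $ay = x - m \in I_n$ (since both $x$ and $m$ lie in $I_n$), so $y \in I_{n-1}$, whence $ay \in aI_{n-1}$ and $x \in aI_{n-1} + \mathfrak{m}I_n$, proving injectivity. Thus $\varphi$ is a graded isomorphism of fiber cones, which is the claim.
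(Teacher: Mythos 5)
Your proof is correct and follows essentially the same route as the paper: both arguments reduce to the degreewise identification $\bigl(F(\mathfrak{F})/a^{o}F(\mathfrak{F})\bigr)_n \cong I_n/(aI_{n-1}+\mathfrak{m}I_n) \cong (I_n+(a))/(\mathfrak{m}I_n+(a)) \cong F(\mathfrak{F}/(a))_n$, with the key point being the identity $(a)\cap I_n = aI_{n-1}$ (equivalently your colon formula $(I_n:a)=I_{n-1}$) coming from regularity of $a^{*}$. The only difference is that you spell out, via $\bigcap_j I_j=0$, the derivation of this identity, which the paper merely asserts as easy.
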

\begin{proof}
Since $a^{*}$ is a regular element of $G(\mathfrak{F})$ it is easy to show that $aA\cap I_{n}=aI_{n-1}$ for any $n\geq 0$. Note that
$$
F(\mathfrak{F}/(a))=\bigoplus_{n\geq 0}\frac{\frac{I_{n}+aA}{aA}}{\frac{\mathfrak{m}}{aA}(\frac{I_{n}+aA}{aA})}=\bigoplus_{n\geq 0}\frac{\frac{I_{n}+aA}{aA}}{\frac{\mathfrak{m}I_{n}+aA}{aA}}\simeq \bigoplus_{n\geq0}\frac{I_{n}+aA}{\mathfrak{m}I_{n}+aA}.
$$
The natural map $$I_{n}\longrightarrow \frac{I_{n}+aA}{\mathfrak{m}I_{n}+aA}$$ induces a isomorphism $$\frac{I_{n}}{\mathfrak{m}I_{n}+(I_{n}\cap aA)}\simeq\frac{I_{n}+aA}{\mathfrak{m}I_{n}+aA}.$$ Note that
$$
\left(\frac{F(\mathfrak{F})}{a^{o}F(\mathfrak{F})}\right)_{n}=\frac{F(\mathfrak{F})_{n}}{(a^{o}F(\mathfrak{F}))_{n}}=
\frac{\frac{I_{n}}{\mathfrak{m}I_{n}}}{\frac{aI_{n-1}+ \mathfrak{m}I_{n}}{\mathfrak{m}I_{n}}}\simeq \frac{I_{n}}{aI_{n-1}+\mathfrak{m}I_{n}}=\frac{I_{n}}{(aA\cap I_{n})+\mathfrak{m}I_{n}}.
$$
\end{proof}

Now it is nature to ask if the regularity of the fiber cone may be greater than the regularity of the associated graded ring and under which conditions we have the equality. In the article \cite{CZ}, Cortadellas and Zarzuela proved for the adic-filtrations case, that if the depth of $F(I)$ and $G(I)$ is at least $s(I)-1$,  their regularities are equal. In \cite{JN}, Jayanthan and Nanduri generalized this result. We verify that it remains valid for any good filtration.

\begin{thm}
Let $(A,\mathfrak{m})$ be a Noetherian local ring and $\mathfrak{F}$ a good filtration. Let suppose $\emph{grade} \ I_{1}=\ell$ and $\emph{grade} \hspace{0.04cm} G(\mathfrak{F})_{+}\geq \ell-1$. Then $\emph{reg} \hspace{0.05cm} F(\mathfrak{F})\geq \emph{reg} \hspace{0.05cm} G(\mathfrak{F})$. Futhermore, if $\emph{depth}\hspace{0.01cm} F(\mathfrak{F})\geq \ell-1$, then $$\emph{reg} \hspace{0.05cm} F(\mathfrak{F})=\emph{reg} \hspace{0.05cm} G(\mathfrak{F}).$$
\end{thm}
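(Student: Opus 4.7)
The plan is to prove both assertions by induction on $\ell$, reducing modulo a carefully chosen element of $I_{1}$.

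For the base case $\ell=1$, the hypothesis $\text{grade}\ I_{1}=1$ matches exactly the hypothesis of Theorem \ref{teo4.2}, which yields $\text{reg}\ F(\mathfrak{F})=\text{reg}\ G(\mathfrak{F})$. The depth requirement in Part 2 is vacuous since $\ell-1=0$, so both statements hold in this case.

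For the inductive step with $\ell\geq 2$, since $A/\mathfrak{m}$ is infinite, prime avoidance produces an $x\in I_{1}\setminus\mathfrak{m}I_{1}$ such that (a) $x$ is superficial for $\mathfrak{F}$ and part of a minimal reduction; (b) $x^{*}$ is a regular element of $G(\mathfrak{F})$, available because $\text{grade}\ G(\mathfrak{F})_{+}\geq\ell-1\geq 1$; (c) $x^{o}\in F(\mathfrak{F})_{1}$ is filter-regular on $F(\mathfrak{F})$; and, for Part 2 only, the stronger (d) $x^{o}$ is $F(\mathfrak{F})$-regular, available because $\text{depth}\ F(\mathfrak{F})\geq\ell-1\geq 1$. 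By \cite[Lemma 3.4]{HZ}, $G(\mathfrak{F}/(x))\cong G(\mathfrak{F})/(x^{*})$, and the regularity of $x^{*}$ yields $\text{reg}\ G(\mathfrak{F}/(x))=\text{reg}\ G(\mathfrak{F})$. Lemma \ref{lema 3} gives $F(\mathfrak{F}/(x))\cong F(\mathfrak{F})/x^{o}F(\mathfrak{F})$. The filtration $\mathfrak{F}/(x)$ inherits $s(\mathfrak{F}/(x))=\ell-1$, $\text{grade}\ I_{1}/(x)=\ell-1$, and $\text{grade}\ G(\mathfrak{F}/(x))_{+}\geq \ell-2$, so the induction hypothesis applies to it.

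For Part 1, the crux is the inequality $\text{reg}(F(\mathfrak{F})/x^{o}F(\mathfrak{F}))\leq \text{reg}\ F(\mathfrak{F})$, which I obtain by splitting the four-term sequence
$$0\to (0:_{F(\mathfrak{F})}x^{o})(-1)\to F(\mathfrak{F})(-1)\xrightarrow{x^{o}}F(\mathfrak{F})\to F(\mathfrak{F})/x^{o}F(\mathfrak{F})\to 0$$
into two short exact sequences. Filter-regularity forces $(0:_{F(\mathfrak{F})}x^{o})$ to have finite length, so its higher local cohomology vanishes; chasing the long exact sequences first bounds $\text{reg}(x^{o}F(\mathfrak{F}))\leq \text{reg}\ F(\mathfrak{F})+1$ and then $\text{reg}(F(\mathfrak{F})/x^{o}F(\mathfrak{F}))\leq \text{reg}\ F(\mathfrak{F})$. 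Combining with the inductive bound $\text{reg}\ F(\mathfrak{F}/(x))\geq \text{reg}\ G(\mathfrak{F}/(x))=\text{reg}\ G(\mathfrak{F})$ gives Part 1.

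For Part 2, the $F(\mathfrak{F})$-regularity of $x^{o}$ produces the clean short exact sequence $0\to F(\mathfrak{F})(-1)\xrightarrow{x^{o}}F(\mathfrak{F})\to F(\mathfrak{F})/x^{o}F(\mathfrak{F})\to 0$, from which the standard local cohomology argument yields $\text{reg}\ F(\mathfrak{F})=\text{reg}(F(\mathfrak{F})/x^{o}F(\mathfrak{F}))$. Moreover $\text{depth}\ F(\mathfrak{F}/(x))=\text{depth}\ F(\mathfrak{F})-1\geq \ell-2$, so the Part 2 inductive hypothesis applies and yields $\text{reg}\ F(\mathfrak{F}/(x))=\text{reg}\ G(\mathfrak{F})$, closing the chain of equalities. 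The main obstacle I anticipate is the Part 1 bookkeeping for the merely filter-regular case, together with verifying that a single $x$ can be chosen to meet all the generic constraints simultaneously; the rest is a formal manipulation of the exact sequences together with Theorem \ref{teo4.2} and Lemma \ref{lema 3}.
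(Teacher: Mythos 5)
Your proposal is correct and follows essentially the same route as the paper: pick elements of a minimal reduction whose initial forms are filter-regular on $G(\mathfrak{F})$ and $F(\mathfrak{F})$ (hence, by the grade/depth hypotheses, actually regular on $G(\mathfrak{F})$, resp.\ $F(\mathfrak{F})$), pass to the quotient filtration via Lemma \ref{lema 3} and \cite[Lemma 3.4]{HZ}, and finish with the analytic-spread-one case, Theorem \ref{teo4.2}. The only differences are cosmetic: you reduce one element at a time by induction on $\ell$ instead of modding out all $\ell-1$ elements at once, and you prove the key inequality $\mathrm{reg}\,\bigl(F(\mathfrak{F})/x^{o}F(\mathfrak{F})\bigr)\leq \mathrm{reg}\,F(\mathfrak{F})$ for a filter-regular $x^{o}$ directly from the two short exact sequences, where the paper cites \cite[Proposition 18.3.11]{BS}.
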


\begin{proof}
If $\ell=1$, by the Theorem \ref{teo4.2} the result is true. Then we can suppose $\ell \geq 2$. By \cite[Proposition 2.2]{JV}, there exist generators $x_{1},...,x_{\ell}$ of a minimal reduction $J$ of $I_{1}$ such that $x_{1}^{*},...,x_{\ell}^{*}\in I_{1}/I_{2}$ is filter-regular sequence of $G(\mathfrak{F})$ and $x_{1}^{o},...,x_{\ell}^{o}\in I_{1}/\mathfrak{m}I_{1}$ is filter-regular of $F(\mathfrak{F})$. By hypothesis $\text{grade} \ G(\mathfrak{F})_{+}\geq \ell-1$. Thus $x_{1}^{*},...,x_{\ell-1}^{*}$ is $G(\mathfrak{F})$-regular due to \cite[Lemma 2.1]{HM}. We denote $\overline{\mathfrak{F}}=\frac{\mathfrak{F}}{(x_{1},...,x_{\ell-1})}$. By \cite[Lemma 3.4]{HZ}, $$G(\overline{\mathfrak{F}})\simeq G(\mathfrak{F})/(x_{1}^{*},...,x_{\ell-1}^{*}).$$ By Lemma \ref{lema 3}, $$F(\overline{\mathfrak{F}})\simeq F(\mathfrak{F})/(x_{1}^{o},...,x_{\ell-1}^{o}).$$ Since $x_{1}^{*},...,x_{\ell-1}^{*}$ are regular, $\text{reg} \ G(\overline{\mathfrak{F}})=\text{reg} \ G(\mathfrak{F})$. By \cite[Proposition 2.5]{JV}, $\dim F(\overline{\mathfrak{F}})=\dim F(\mathfrak{F})-(\ell-1)=1$. By using \cite[Proposition 3.5]{HM} and \cite[Proposition 1.2.10(d)]{BH}, we have $$\text{grade} \ \frac{I_{1}}{(x_{1},...,x_{\ell-1})}=\text{grade} \ I_{1}-(\ell-1)=1.$$ Thus, by Theorem \ref{teo4.2} we achieve $\text{reg} \ F(\overline{\mathfrak{F}})=\text{reg} \ G(\overline{\mathfrak{F}})=\text{reg} \ G(\mathfrak{F})$. By \cite[Proposition 18.3.11]{BS}, $\text{reg} \ F(\overline{\mathfrak{F}})\leq \text{reg} \ F(\mathfrak{F})$ and it implies that $$
\text{reg} \ G(\mathfrak{F})\leq \text{reg} \ F(\mathfrak{F}).
$$

Now, let assume $\text{depth} \ F(\mathfrak{F})\geq \ell-1$. Then $x_{1}^{o},...,x_{\ell-1}^{o}$ is $F(\mathfrak{F})$-regular. Then $\text{reg} \ F(\mathfrak{F})= \text{reg} \ F(\overline{\mathfrak{F}})$. Therefore $\text{reg} \ F(\mathfrak{F})= \text{reg} \ G(\mathfrak{F})$.
\end{proof}
\begin{prop}\label{prop4.6}
Let $(A,\mathfrak{m})$ a Noetherian local ring and $\mathfrak{F}$ a good filtration. If $\emph{reg} \ R(\mathcal{F})\leq \emph{reg} \ R(\mathfrak{F})$, then $$\emph{reg} \ F(\mathfrak{F})=\emph{reg} \ G(\mathfrak{F}).$$
\end{prop}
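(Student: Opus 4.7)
The plan is to chain the standard short-exact-sequence bounds for Castelnuovo--Mumford regularity (as in \cite[Exercise 15.2.15]{BS}) across the two sequences \eqref{exa1} and \eqref{exa2}, using Corollary \ref{cor} to swap $\text{reg}\ R(\mathfrak{F})$ for $\text{reg}\ G(\mathfrak{F})$ at the critical moment.

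First, I would apply the quotient-bound to \eqref{exa1} in order to control $\mathfrak{m}G(\mathfrak{F})$:
$$\text{reg}\ \mathfrak{m}G(\mathfrak{F})(-1) \;\leq\; \max\{\text{reg}\ R(\mathcal{F}),\ \text{reg}\ R(\mathfrak{F})-1\}.$$
Using $\text{reg}\ \mathfrak{m}G(\mathfrak{F})(-1)=\text{reg}\ \mathfrak{m}G(\mathfrak{F})+1$ together with the hypothesis $\text{reg}\ R(\mathcal{F})\leq \text{reg}\ R(\mathfrak{F})$, this collapses to
$$\text{reg}\ \mathfrak{m}G(\mathfrak{F}) \;\leq\; \text{reg}\ R(\mathfrak{F})-1 \;=\; \text{reg}\ G(\mathfrak{F})-1,$$
where the last equality is Corollary \ref{cor}. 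The net effect is a strict gap $\text{reg}\ \mathfrak{m}G(\mathfrak{F}) < \text{reg}\ G(\mathfrak{F})$, and this strict inequality is the heart of the argument.

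With the gap in hand, I would exploit \emph{both} regularity bounds coming from \eqref{exa2}. The quotient-bound yields
$\text{reg}\ F(\mathfrak{F}) \leq \max\{\text{reg}\ G(\mathfrak{F}),\ \text{reg}\ \mathfrak{m}G(\mathfrak{F})-1\} = \text{reg}\ G(\mathfrak{F})$, giving one direction immediately. The middle-term bound
$\text{reg}\ G(\mathfrak{F}) \leq \max\{\text{reg}\ \mathfrak{m}G(\mathfrak{F}),\ \text{reg}\ F(\mathfrak{F})\}$, combined with $\text{reg}\ \mathfrak{m}G(\mathfrak{F}) < \text{reg}\ G(\mathfrak{F})$, then forces the maximum to be attained by $\text{reg}\ F(\mathfrak{F})$, so $\text{reg}\ G(\mathfrak{F}) \leq \text{reg}\ F(\mathfrak{F})$. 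The two inequalities combine to the desired equality.

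The only delicate point is producing the strict gap in the first step; everything else is purely formal manipulation of the regularity inequalities for a short exact sequence. The hypothesis $\text{reg}\ R(\mathcal{F}) \leq \text{reg}\ R(\mathfrak{F})$ is precisely what generates that gap, so beyond bookkeeping of the shift $(-1)$ in \eqref{exa1} and correctly invoking Corollary \ref{cor}, I do not anticipate any serious obstacle.
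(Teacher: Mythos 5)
Your proposal is correct and takes essentially the same route as the paper: both apply the short-exact-sequence regularity bounds of \cite[Exercise 15.2.15]{BS} first to \eqref{exa1} (using the hypothesis $\text{reg}\ R(\mathcal{F})\leq \text{reg}\ R(\mathfrak{F})$ to get $\text{reg}\ \mathfrak{m}G(\mathfrak{F})\leq \text{reg}\ R(\mathfrak{F})-1$) and then to \eqref{exa2}, invoking Corollary \ref{cor} to identify $\text{reg}\ R(\mathfrak{F})$ with $\text{reg}\ G(\mathfrak{F})$. If anything, your version is slightly more complete, since you make explicit the reverse inequality $\text{reg}\ G(\mathfrak{F})\leq \text{reg}\ F(\mathfrak{F})$ via the middle-term bound and the strict gap $\text{reg}\ \mathfrak{m}G(\mathfrak{F})<\text{reg}\ G(\mathfrak{F})$, a step the paper's printed proof only gestures at in its (garbled) closing sentence.
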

\begin{proof}
By using the properties of regularity for exact sequences \cite[Exercise 15.2.15]{BS}, we can conclude by the exact sequence \eqref{exa1} that $$\text{reg} \ \mathfrak{m}G(\mathfrak{F})(-1)=\text{reg} \ \mathfrak{m}G(\mathfrak{F})+1\leq \max\{\text{reg} \  R(\mathfrak{F})-1, \text{reg} \ R(\mathcal{F}) \}.$$ But $\text{reg} \ R(\mathcal{F})\leq \text{reg} \ R(\mathfrak{F}),$ then from the exact sequence \eqref{exa2}
$$
\text{reg} \ F(\mathfrak{F}) \leq \max\{\text{reg} \ \mathfrak{m}G(\mathfrak{F})-1, \text{reg} \ G(\mathfrak{F}) \}\leq \text{reg} R(\mathfrak{F})
$$
since $\text{reg} \ R(\mathfrak{F})= \text{reg}\ G(\mathfrak{F})$ (Corollary \ref{cor}). Thus $\text{reg} \ R(\mathfrak{F}) \leq \text{reg} G(\mathfrak{F})$ as we required.
\end{proof}

\begin{prop}\label{prop1}
Let $(A,\mathfrak{m})$ be a Noetherian local ring such that $\emph{grade} \hspace{0.05cm} I_{1}>0$. Let suppose that $I_{n}=\mathfrak{m}I_{n-1}$ for $n\gg 0$. Then $$\emph{reg} \hspace{0.05cm} F(\mathfrak{F})= \emph{reg} \hspace{0.05cm} G(\mathfrak{F}).$$
\end{prop}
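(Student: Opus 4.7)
The strategy is to invoke Proposition~\ref{prop4.6}: it suffices to show that $\text{reg}\ R(\mathcal{F}) \leq \text{reg}\ R(\mathfrak{F})$, which I will obtain by comparing the local cohomology of the two Rees algebras through the short exact sequence~\eqref{exa1}.

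Set $M := \mathfrak{m}G(\mathfrak{F})(-1)$, so that $M_n = \mathfrak{m}(I_{n-1}/I_n)$ for $n \geq 1$ and $M_0 = 0$. The hypothesis $I_n = \mathfrak{m}I_{n-1}$ for $n \gg 0$ forces $M_n = 0$ for all sufficiently large $n$. Since $R(\mathcal{F})$ is finitely generated over $R(\mathfrak{F})$, the cokernel $M$ in \eqref{exa1} is a finitely generated graded $R(\mathfrak{F})$-module whose graded pieces eventually vanish; fixing a finite set of homogeneous generators, any sufficiently high power of $R(\mathfrak{F})_+$ pushes each generator into a zero component, so $M$ is $R(\mathfrak{F})_+$-torsion. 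This gives $H^0_{R(\mathfrak{F})_+}(M) = M$ and $H^i_{R(\mathfrak{F})_+}(M) = 0$ for every $i \geq 1$.

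Using $\text{grade}\ I_1 > 0$, I pick a regular element $a \in I_1$. Then $at \in R(\mathfrak{F})_1 \subseteq R(\mathfrak{F})_+$ acts as a nonzero-divisor both on $R(\mathfrak{F})$ and on $R(\mathcal{F})$, since multiplication by $a$ is injective on $A$ and each graded component of either ring is a submodule of $A$. Hence $H^0_{R(\mathfrak{F})_+}(R(\mathfrak{F})) = 0 = H^0_{R(\mathfrak{F})_+}(R(\mathcal{F}))$.

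Writing out the long exact sequence of $H^{\bullet}_{R(\mathfrak{F})_+}$ attached to~\eqref{exa1} and inserting these vanishings yields the short exact sequence
$$0 \to M \to H^1_{R(\mathfrak{F})_+}(R(\mathfrak{F})) \to H^1_{R(\mathfrak{F})_+}(R(\mathcal{F})) \to 0$$
together with isomorphisms $H^i_{R(\mathfrak{F})_+}(R(\mathfrak{F})) \cong H^i_{R(\mathfrak{F})_+}(R(\mathcal{F}))$ for every $i \geq 2$. Therefore $a_i(R(\mathcal{F})) \leq a_i(R(\mathfrak{F}))$ for all $i \geq 0$, whence $\text{reg}\ R(\mathcal{F}) \leq \text{reg}\ R(\mathfrak{F})$, and Proposition~\ref{prop4.6} closes the argument. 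The only mildly subtle step is the one hidden in the second paragraph: verifying that finite generation of $M$ combined with the eventual vanishing of its graded components is enough to guarantee $R(\mathfrak{F})_+$-torsion. Everything else is a formal chase on the long exact sequence.
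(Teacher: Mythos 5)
Your argument is correct, and it takes a genuinely different route from the paper's. The paper proves the statement directly from the sequence \eqref{exa2}: it observes that $\mathfrak{m}G(\mathfrak{F})$ is $R(\mathfrak{F})_{+}$-torsion, deduces $H^{i}(G(\mathfrak{F}))\cong H^{i}(F(\mathfrak{F}))$ for $i\geq 1$ and $a_{0}(F(\mathfrak{F}))\leq a_{0}(G(\mathfrak{F}))$, shows $\operatorname{depth} G(\mathfrak{F})=0$, and then uses \cite[Proposition 3.5]{HZ} (via $\operatorname{grade} I_{1}>0$) to get $a_{0}(G(\mathfrak{F}))<a_{1}(G(\mathfrak{F}))$, so that the $i=0$ terms are irrelevant on both sides and the regularities agree. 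You instead work with \eqref{exa1}: the same torsionness of $\mathfrak{m}G(\mathfrak{F})(-1)$, together with the observation that a regular element $a\in I_{1}$ makes $at$ a nonzerodivisor on both Rees algebras (killing both $H^{0}$'s), gives $a_{i}(R(\mathcal{F}))\leq a_{i}(R(\mathfrak{F}))$ for all $i$, hence $\operatorname{reg} R(\mathcal{F})\leq\operatorname{reg} R(\mathfrak{F})$, and you then invoke Proposition~\ref{prop4.6}. Your intermediate computation is sound (and of independent interest, since it verifies the hypothesis appearing in Lemma~\ref{lema 2} and Proposition~\ref{prop2}(ii) in this situation), while the paper's direct proof yields the finer information $a_{i}(F(\mathfrak{F}))=a_{i}(G(\mathfrak{F}))$ for $i\geq 1$ without passing through the Rees algebras. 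One caveat: the written proof of Proposition~\ref{prop4.6} in the paper only spells out $\operatorname{reg} F(\mathfrak{F})\leq\operatorname{reg} G(\mathfrak{F})$ (its last sentence is garbled); the reverse inequality does follow there, since $\operatorname{reg}\mathfrak{m}G(\mathfrak{F})\leq\operatorname{reg} R(\mathfrak{F})-1<\operatorname{reg} G(\mathfrak{F})$ forces $\operatorname{reg} G(\mathfrak{F})\leq\operatorname{reg} F(\mathfrak{F})$ via \eqref{exa2}, but you are leaning on that supplement implicitly, whereas the paper's own proof of the present proposition is self-contained on this point.
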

\begin{proof}
Let $n_{0}$ be such that $I_{n}=\mathfrak{m}I_{n-1}$ for $n\geq n_{0}$. Note that $R(\mathfrak{F})_{+}^{n_{0}}\mathfrak{m}G(\mathfrak{F})=0$. Since $\mathfrak{m}G(\mathfrak{F})$ is a $R(\mathfrak{F})_{+}$-torsion, $$H_{R(\mathfrak{F})_{+}}^{i}(\mathfrak{m}G(\mathfrak{F}))=0,$$ for $i>0$ and $$H_{R(\mathfrak{F})_{+}}^{0}(\mathfrak{m}G(\mathfrak{F}))=\mathfrak{m}G(\mathfrak{F}).$$ If $\mathfrak{m}G(\mathfrak{F})=0$, we have $\mathfrak{m}I_{n}=I_{n+1}$ for all $n$. Thus $F(\mathfrak{F})=G(\mathfrak{F})$ and the proposition follows trivially. Let suppose that $\mathfrak{m}G(\mathfrak{F})\neq 0$. From the exact sequence \eqref{exa2} we can conclude that $$0\rightarrow H^{0}(\mathfrak{m}G(\mathfrak{F})) \rightarrow H^{0}(G(\mathfrak{F}))\rightarrow H^{0}(F(\mathfrak{F})) \rightarrow 0$$ and $H^{i}(G(\mathfrak{F}))\cong H^{i}(F(\mathfrak{F}))$ for $i>0$. Hence $a_{0}(F(\mathfrak{F}))\leq a_{0}(G(\mathfrak{F}))$ and $a_{i}(G(\mathfrak{F}))= a_{i}(F(\mathfrak{F}))$ for $i>0$. We claim that $\text{depth} \hspace{0.05cm} G(\mathfrak{F})$ equal zero. If $\text{depth} \hspace{0.05cm} G(\mathfrak{F})>0$, similarly to \cite[Theorem 8]{P}, $H^{0}(G(\mathfrak{F}))=0$. Then, from the exact sequence above, $H^{0}(\mathfrak{m}G(\mathfrak{F}))=0$, a contradiction. By \cite[Proposition 3.5]{HZ} and by the hypothesis $$a_{0}(F(\mathfrak{F}))\leq a_{0}(G(\mathfrak{F})) < a_{1}(G(\mathfrak{F}))= a_{1}(F(\mathfrak{F})).$$ Therefore $$\text{reg} \hspace{0.05cm} F(\mathfrak{F})=\max\{a_{i}(F(\mathfrak{F}))+i:i\geq 1 \}=\max\{a_{i}(G(\mathfrak{F}))+i:i\geq 1 \}=\text{reg} \hspace{0.05cm} G(\mathfrak{F}).$$
\end{proof}

\begin{prop}\label{prop2}
Let $(A,\mathfrak{m})$ be a Noetherian local ring and $\mathfrak{F}$ a good filtration.
Let suppose that $\mathfrak{m}G(\mathfrak{F})$ is $R(\mathfrak{F})$-module Cohen-Macaulay
of dimension $\ell$. Then
\begin{itemize}
\item [\emph{(i)}] $\emph{reg} \ F(\mathfrak{F})\leq \emph{reg} \ G(\mathfrak{F})$;
\item [\emph{(ii)}] if $a_{\ell}(R(\mathcal{F}))-1<a_{\ell}(R(\mathfrak{F}))$ then $\emph{reg} \ F(\mathfrak{F}) = \emph{reg} \ R(\mathfrak{F})$;
\item [\emph{(iii)}] if $a_{\ell}(R(\mathcal{F}))-1=a_{\ell}(R(\mathfrak{F}))$ then $\emph{reg} \
\mathfrak{m}G(\mathfrak{F}) \leq \emph{reg} \ R(\mathfrak{F})$ and $\emph{reg} \ F(\mathfrak{F})\leq \emph{reg} \ R(\mathfrak{F})$.
\end{itemize}

\noindent Futhermore, if $\emph{reg} \ \mathfrak{m}G(\mathfrak{F}) < \emph{reg} \ G(\mathfrak{F})$ then $\emph{reg} \ F(\mathfrak{F})=\emph{reg}  \ R(\mathfrak{F})$.
\end{prop}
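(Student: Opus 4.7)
The plan is to exploit the vanishing $H^i_{R(\mathfrak{F})_+}(\mathfrak{m}G(\mathfrak{F})) = 0$ for $i \neq \ell$ coming from the Cohen--Macaulay hypothesis. Writing $H^i(-) := H^i_{R(\mathfrak{F})_+}(-)$, I would first feed this vanishing into the long exact sequences attached to \eqref{exa1} and \eqref{exa2}. Sequence \eqref{exa2} collapses to a six-term exact sequence
\[
0 \to H^{\ell-1}(G(\mathfrak{F})) \to H^{\ell-1}(F(\mathfrak{F})) \to H^\ell(\mathfrak{m}G(\mathfrak{F})) \to H^\ell(G(\mathfrak{F})) \to H^\ell(F(\mathfrak{F})) \to 0,
\]
with isomorphisms $H^i(G(\mathfrak{F})) \simeq H^i(F(\mathfrak{F}))$ for $i \notin \{\ell-1, \ell\}$. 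Since $\dim F(\mathfrak{F}) = \ell$, Grothendieck vanishing yields $H^i(G(\mathfrak{F})) = 0$ for $i > \ell$, and Theorem \ref{teo3.1}(i) transfers this to $R(\mathfrak{F})$ (and then via \eqref{exa1} to $R(\mathcal{F})$). Sequence \eqref{exa1} therefore collapses to
\[
0 \to H^\ell(R(\mathfrak{F})) \to H^\ell(R(\mathcal{F})) \to H^\ell(\mathfrak{m}G(\mathfrak{F}))(-1) \to 0,
\]
and combining the resulting bound $a_\ell(\mathfrak{m}G(\mathfrak{F})) + 1 \leq a_\ell(R(\mathcal{F}))$ with Lemma \ref{lema 2} produces the key inequality $a_\ell(\mathfrak{m}G(\mathfrak{F})) \leq a_\ell(R(\mathfrak{F}))$.

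For (i) I would compare $a_i(F(\mathfrak{F})) + i$ with $\text{reg}\, G(\mathfrak{F})$ term by term: the range $i \notin \{\ell-1, \ell\}$ is handled by the isomorphisms above, and at $i = \ell$ the six-term sequence exhibits $H^\ell(F(\mathfrak{F}))$ as a quotient of $H^\ell(G(\mathfrak{F}))$. The only case requiring the key inequality is $i = \ell - 1$, where the six-term sequence gives $a_{\ell-1}(F(\mathfrak{F})) \leq \max\{a_{\ell-1}(G(\mathfrak{F})),\, a_\ell(\mathfrak{m}G(\mathfrak{F}))\}$; the key inequality together with Corollary \ref{cor} then bounds $a_\ell(\mathfrak{m}G(\mathfrak{F})) + \ell - 1 \leq \text{reg}\, R(\mathfrak{F}) - 1 = \text{reg}\, G(\mathfrak{F}) - 1$.

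For the \emph{furthermore} clause, and then (ii), I would first establish $\text{reg}\, F(\mathfrak{F}) \geq \text{reg}\, G(\mathfrak{F})$ under the assumption $\text{reg}\, \mathfrak{m}G(\mathfrak{F}) < \text{reg}\, G(\mathfrak{F})$. Picking $i$ maximal with $a_i(G(\mathfrak{F})) + i = \text{reg}\, G(\mathfrak{F})$ and splitting into $i \leq \ell - 2$, $i = \ell - 1$ (using the injection $H^{\ell-1}(G(\mathfrak{F})) \hookrightarrow H^{\ell-1}(F(\mathfrak{F}))$), and $i = \ell$, the last is the subtle case: the strict inequality $a_\ell(\mathfrak{m}G(\mathfrak{F})) + \ell < a_\ell(G(\mathfrak{F})) + \ell$, which is exactly $\text{reg}\, \mathfrak{m}G(\mathfrak{F}) < \text{reg}\, G(\mathfrak{F})$ under Cohen--Macaulayness, forces $H^\ell(\mathfrak{m}G(\mathfrak{F}))_{a_\ell(G(\mathfrak{F}))} = 0$, so the surjection $H^\ell(G(\mathfrak{F})) \to H^\ell(F(\mathfrak{F}))$ is an isomorphism in that degree and produces a nonzero element of $H^\ell(F(\mathfrak{F}))_{a_\ell(G(\mathfrak{F}))}$. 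Part (ii) then reduces to checking that the hypothesis $a_\ell(R(\mathcal{F})) - 1 < a_\ell(R(\mathfrak{F}))$ implies $\text{reg}\, \mathfrak{m}G(\mathfrak{F}) < \text{reg}\, G(\mathfrak{F})$: the short exact sequence combined with Theorem \ref{teo3.1}(ii) delivers $a_\ell(\mathfrak{m}G(\mathfrak{F})) < a_\ell(G(\mathfrak{F}))$, and the Cohen--Macaulay hypothesis makes $\text{reg}\, \mathfrak{m}G(\mathfrak{F}) = a_\ell(\mathfrak{m}G(\mathfrak{F})) + \ell$.

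Part (iii) is the shortest: the key inequality gives $\text{reg}\, \mathfrak{m}G(\mathfrak{F}) = a_\ell(\mathfrak{m}G(\mathfrak{F})) + \ell \leq a_\ell(R(\mathfrak{F})) + \ell \leq \text{reg}\, R(\mathfrak{F})$, while $\text{reg}\, F(\mathfrak{F}) \leq \text{reg}\, R(\mathfrak{F})$ is a combination of (i) and Corollary \ref{cor}. The main obstacle I anticipate is the bookkeeping at the boundary degrees $\ell-1$ and $\ell$ of the six-term sequence, and in particular the identification $a_\ell(R(\mathfrak{F})) = a_\ell(G(\mathfrak{F}))$ used in (ii): Theorem \ref{teo3.1}(ii) gives this for $\ell \geq 2$, while the residual case $\ell = 1$ is already covered by Theorem \ref{teo4.2}.
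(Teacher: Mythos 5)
Your argument is sound in outline but takes a genuinely different route from the paper's. The paper proves (ii) almost immediately: the collapsed long exact sequence of \eqref{exa1} gives $a_i(R(\mathcal{F}))=a_i(R(\mathfrak{F}))$ for $i\neq\ell$, so the hypothesis of (ii) yields $\text{reg}\,R(\mathcal{F})\leq\text{reg}\,R(\mathfrak{F})$ and Proposition \ref{prop4.6} finishes; (iii) comes from the coarse regularity bounds for short exact sequences applied to \eqref{exa1} and \eqref{exa2}; (i) is then deduced from (ii) and (iii) via Lemma \ref{lema 2}; and the furthermore clause only uses $\text{reg}\,G(\mathfrak{F})\leq\max\{\text{reg}\,\mathfrak{m}G(\mathfrak{F}),\text{reg}\,F(\mathfrak{F})\}$. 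You instead work degree by degree: you extract the six-term sequence from \eqref{exa2} and the short exact sequence from \eqref{exa1}, prove the key inequality $a_\ell(\mathfrak{m}G(\mathfrak{F}))\leq a_\ell(R(\mathfrak{F}))$, get (i) and (iii) directly from it (your (iii) in fact holds without its stated hypothesis), prove the furthermore clause by locating a surviving top-degree cohomology class, and funnel (ii) through it. This is finer-grained and avoids Proposition \ref{prop4.6}, at the cost of being longer; both structures are valid.

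Two justifications need repair. First, $H^i_{R(\mathfrak{F})_+}(G(\mathfrak{F}))=0$ for $i>\ell$ does not follow from Grothendieck vanishing via $\dim F(\mathfrak{F})=\ell$: the module is $G(\mathfrak{F})$, of dimension $\dim A\geq\ell$, and the cohomology is with respect to $R(\mathfrak{F})_+$, not the maximal graded ideal. The correct reason, already implicit in the paper, is that $\sqrt{R(\mathfrak{F})_+}=\sqrt{(x_1,\dots,x_\ell)}$ for a minimal reduction (Lemma \ref{lema 2}), so $\text{ara}(R(\mathfrak{F})_+)\leq\ell$ and $H^i_{R(\mathfrak{F})_+}(M)=0$ for every module $M$ and $i>\ell$; this also gives the vanishing you need for $R(\mathfrak{F})$ and $R(\mathcal{F})$ at once. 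Second, in (ii) you need $a_\ell(R(\mathfrak{F}))\leq a_\ell(G(\mathfrak{F}))$ when $\ell=1$, and deferring to Theorem \ref{teo4.2} does not supply it: without $\text{grade}\,I_1=1$ that theorem only gives $\text{reg}\,F(\mathfrak{F})\leq\text{reg}\,G(\mathfrak{F})$, not the equality $\text{reg}\,F(\mathfrak{F})=\text{reg}\,R(\mathfrak{F})$ claimed in (ii). The correct patch is the one used in the paper's proof of Theorem \ref{teo4.2}: $\ell=1$ forces $H^1_{G(\mathfrak{F})_+}(G(\mathfrak{F}))\neq 0$ (remark of Trung), so Theorem \ref{teo3.1}(iii) applies and yields $a_1(R(\mathfrak{F}))\leq a_1(G(\mathfrak{F}))$, which is all your argument requires. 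With these two repairs your proof is complete.
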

\begin{proof}

Since $\mathfrak{m}G(\mathfrak{F})$ is also Cohen-Macaulay over $G(\mathfrak{F})$, $H^{i}(\mathfrak{m}G(\mathfrak{F}))=0$ for $i\neq \ell$. From the exact sequence \eqref{exa1} we have $$...\rightarrow H^{i}(R(\mathfrak{F}))\rightarrow H^{i}(R(\mathcal{F}))\rightarrow H^{i}(\mathfrak{m}G(\mathfrak{F})(-1))\rightarrow H^{i+1}(R(\mathfrak{F}))\rightarrow ... $$ Then $H^{i}(R(\mathfrak{F}))\cong H^{i}(R(\mathcal{F}))$ for $i\neq \ell$ so that $a_{i}(R(\mathcal{F}))=a_{i}(R(\mathfrak{F}))$ for $i\neq \ell$.

First, let prove (ii). By hyphotesis we have $a_{\ell}(R(\mathcal{F}))\leq a_{\ell}(R(\mathfrak{F}))$. Therefore $a_{i}(R(\mathcal{F}))\leq a_{i}(R(\mathfrak{F}))$ for all $i$ so that $\text{reg} \ R(\mathcal{F}) \leq \text{reg} \ R(\mathfrak{F})$. From the Proposition \ref{prop4.6}, $\text{reg} \ F(\mathfrak{F})=\text{reg} \ R(\mathfrak{F})$.

Now, let prove (iii). If $a_{\ell}(R(\mathcal{F}))-1= a_{\ell}(R(\mathfrak{F}))$ we have
$$
\text{reg} \ R(\mathcal{F})\leq \ \text{reg} \ R(\mathfrak{F})+1
$$
since $a_{i}(R(\mathcal{F}))=a_{i}(R(\mathfrak{F}))$ for $i\neq \ell$. From the exact sequence \eqref{exa1} we have
$$
\begin{array}{lll}
  \text{reg} \ (\mathfrak{m}G(\mathfrak{F})(-1)) & = & \text{reg} \  \mathfrak{m}G(\mathfrak{F})+1 \vspace{0.3cm}\\
   & \leq & \max \{ \text{reg} \ R(\mathfrak{F})-1  ,\text{reg} \ R(\mathcal{F})  \} \vspace{0.3cm}\\
   & \leq & \text{reg} \ R(\mathfrak{F})+1.
\end{array}
$$
From the exact sequence \eqref{exa2},
$$
\text{reg} \ F(\mathfrak{F})\leq \max \{ \text{reg} \ \mathfrak{m}G(\mathfrak{F})-1  ,\text{reg} \ G(\mathfrak{F})  \} \leq \text{reg} \ R(\mathfrak{F}).
$$ Hence (iii) is proved. By Lemma \ref{lema 2}, $a_{\ell}(R(\mathcal{F}))-1\leq a_{\ell}(R(\mathcal{F})).$ Then by (ii) and (iii), we have (i).

Finally, let assume $\text{reg} \ \mathfrak{m}G(\mathfrak{F})< \text{reg} \ G(\mathfrak{F}).$ From the exact sequence \eqref{exa2},
\begin{equation}\label{e}
\text{reg} \ G(\mathfrak{F})\leq \max \{ \mathfrak{m}G(\mathfrak{F}) ,\text{reg} \ F(\mathfrak{F}) \}.
\end{equation}
Since $\text{reg} \ \mathfrak{m}G(\mathfrak{F})< \text{reg} \ G(\mathfrak{F})$, by \eqref{e}, $$\max \{ \mathfrak{m}G(\mathfrak{F}) ,\text{reg} \ F(\mathfrak{F}) \}=\text{reg} \ F(\mathfrak{F}).$$ Thus $\text{reg} \ G(\mathfrak{F})\leq \text{reg} \ F(\mathfrak{F})$. By Corollary \ref{cor}, $\text{reg} \ G(\mathfrak{F})=\text{reg} \ R(\mathfrak{F})$. Hence $\text{reg} \  R(\mathfrak{F})\leq \text{reg} \ F(\mathfrak{F})$. The other inequality already was proved.
\end{proof}

\begin{agra}
We would like to thank the professors J. V. Jayanthan and R. Nanduri for the clarifications.
\end{agra}

\textbf{Department of Mathematics, Institute of Mathematics and Computer Science, ICMC, University of S\~{a}o Paulo, BRAZIL.
}

\emph{E-mail address}: apoliano27@gmail.com

\hspace{1cm}

\textbf{Department of Mathematics, Institute of Mathematics and Computer Science, ICMC, University of S\~{a}o Paulo, BRAZIL.
}

\emph{E-mail address}: vhjperez@icmc.usp.br 

\hspace{1cm}

\begin{thebibliography}{PTW02}

 \bibitem[B]{B} N. Bourbaki, \emph{Commutative algebra}, Addison Wesley, Reading, 1972.

 \bibitem[BH]{BH} W. Bruns, J. Herzog, \emph{Cohen-Macaulay rings}, Revised Edition, Cambridge Studies in Advanced Mathematics, 39. Cambridge University Press, Cambridge, 1998.

 \bibitem[BS]{BS} M. P. Brodmann, R. Y. Sharp, \emph{Local cohomology: an algebraic introduction with geometric applications}, Cambridge Studies in Advanced Mathematics, 60. Cambridge University Press, Cambridge, 1998.

 \bibitem[CZ]{CZ} T.Cortadellas, S. Zarzuela, \emph{On the scruture of the fiber cone of ideals with analytic spread one}, J. Algebra \textbf{317} , 2007, no. 2, 759-785.

 \bibitem[CZ2]{CZ2} T. Cortadellas, S. Zarzuela, \emph{Depth formulas for certain graded modules associated to a filtration: a survey}, Geometric and combinatorial aspects of commutative algebra (Messina, 1999), 145-157, Lecture Notes in Pure and Appl. Math., 217, Dekker, New york, 2001.

 \bibitem[CZ1]{CZ1} T. Cortadellas, S. Zarzuela, \emph{On the depth of the fiber cone of filtrations}, J. Algebra \textbf{198}, 1997, no 2, 428-445.

 \bibitem[EG]{EG} D. Eisenbud and S. Goto, \emph{Linear free resolutions and minimal multiplicity}, J. Algebra 88, 1984, 89-133.

 \bibitem[GP]{GP} Gert-Martin Greuel, G. Pfister, \emph{A Singular introduction to commutative algebra}, Second extended edition, Springer, Berlin, 2008.

 \bibitem[HIO]{HIO} M. Herrmann; S. Ikeda, U. Orbanz, \emph{Equimultiplicity and blowing up}, An algebraic study. With an appendix by B. Moonen. Springer-Verlag, Berlin, 1988.

 \bibitem[H]{H} L. T. Hoa, \emph{Reduction numbers of equimultiple ideals}, Institute of Mathematics, Journal of Pure and Applied Algebra 109, 1996, 111-126.

 \bibitem[HM]{HM} S. Huckaba, T. Marley, \emph{Hilbert coefficients and the depths of associated graded rings}, J. London Math. Soc. (2) 56 (1997), no. 1, 64–76.

 \bibitem[HZ]{HZ} L. T. Hoa and S. Zarzuela, \emph{Reduction number and a-invariant of good filtrations}, Comm. Algebra \textbf{14}, 1994, no. 22, 5635-5656.

 \bibitem[JN]{JN} A. V. Jayanthan, R. Nanduri, \emph{Castelnuovo-Mumford regularity and gorensteinness of fiber cone}, arXiv:1103.3555v1, 2011.

 \bibitem[JU]{JU} M. Johnson and B. Ulrich, \emph{Artin-Nagata properties and Cohen-Macaulay associated graded rings}, Compositio Math. 108, 1996, 7-29.

 \bibitem[JV]{JV} A. V. Jayanthan, J. K. Verma, \emph{Hilbert coefficients and depth of fiber cones}, Journal of Pure and Applied Algebra 201, 2005, 97-115.

 \bibitem[M]{M} H. Matsumura, \emph{Commutative ring theory}, Cambridge University Press, 1980.

 \bibitem[NR]{NR} D. G. Northcott and D. Rees, \emph{Reductions of ideals in local rings}, Proc. Cambridge Philos Soc. 50, 1954, 145-158.

 \bibitem[O]{O} A. Ooishi, \emph{Genera and arithmetic genera of commutative rings}, Hiroshima Math. J. \textbf{17}, 1987, 47-66.

 \bibitem[O2]{O2} A.  Ooishi, \emph{Castelnuovo's regularity of graded rings and modules}, Hiroshima Math. J. 12, 1982, 627-644.

 \bibitem[P]{P} T. Puthenpurakal, \emph{Hilbert-coefficients of a Cohen-Macaulay module}, J. Algebra 264 (2003), no. 1, 82–97.

 \bibitem[R1]{R1} D. Rees, \emph{A note on analytically unramified local rings}, J. london Math. Soc. \textbf{36}, 24-28, 1961.

 \bibitem[R2]{R2} D. Rees, \emph{Lectures on the asymptotic theory of ideals}, London Math. Soc. Lecture Notes Series 113, Cambridge Univ. Press 1988.

 \bibitem[RR]{RR} L. J. Ratliff Jr. and D. E. Rush, \emph{Two notes on reduction of ideals}, Indiana Univ. Math. J., \textbf{27}, 929-934, 1978.

 \bibitem[T]{T} N. V. Trung, \emph{The Castelnuovo regularity of the Rees algebra and the associated graded ring}, Trans. of Amer. Math. Soc. \textbf{350}, 1998, no. 7, 2813-2832.

 \bibitem[V]{V} W. V. Vasconcelos, \emph{Computational methods in commutative algebra and algebraic geometry},  Algorithms and Computation in Mathematics, 2. Springer-Verlag, Berlin, 1998.


\end{thebibliography}
\end{document}